\documentclass[11pt]{article}

\usepackage[hyperfootnotes=false]{hyperref}
\usepackage[margin=1in]{geometry}
\usepackage{graphicx}
\usepackage{needspace}

\usepackage{amsmath}
\usepackage{amssymb}
\usepackage{xcolor}
\usepackage{bm}

\usepackage[amsmath,thmmarks]{ntheorem}
\newtheorem{theorem}{Theorem}
\newtheorem{lemma}[theorem]{Lemma}

\newtheorem{corollary}[theorem]{Corollary}
\newtheorem{definition}[theorem]{Definition}
{
	\theoremstyle{nonumberplain}
	\theoremheaderfont{\bfseries}
	\theorembodyfont{\normalfont}
	\newtheorem{remark}{Remark.}
}
{
	\theoremstyle{nonumberplain}
	\theoremheaderfont{\bfseries}
	\theorembodyfont{\normalfont}
	\theoremsymbol{\mbox{$\Box$}}
	\newtheorem{proof}{Proof.}
}

\newcommand{\abs}[1]{\left\lvert#1\right\rvert}
\newcommand{\norm}[1]{\left\lVert#1\right\rVert}
\newcommand{\R}{\mathbb R}
\newcommand{\Nzero}{\mathbb N_0}
\renewcommand{\P}{\mathbb P}
\newcommand{\I}{\mathrm{I}}
\newcommand{\II}{\mathrm{II}}
\newcommand{\III}{\mathrm{III}}
\newcommand{\fnn}{\mathrm{NN}}
\newcommand{\nslot}{\operatorname{nslot}}

\numberwithin{equation}{section}

\usepackage{biblatex}
\title{Exact Representation of Piecewise Polynomial Finite Element Functions by Two-Hidden-Layer Multilayer Perceptrons}
\author{Jun Hu\footnote{School of Mathematical Sciences, Peking University, Beijing 100871, China, and Chongqing Research Institute of Big Data, Peking University, Chongqing 401329, China (hujun@math.pku.edu.cn).} , Pengzhan Jin\footnote{National Engineering Laboratory for Big Data Analysis and Applications, Peking University, Beijing 100871, China, and Chongqing Research Institute of Big Data, Peking University, Chongqing 401329, China (jpz@pku.edu.cn).} , Shengyang Xu\footnote{School of Mathematical Sciences, Peking University, Beijing 100871, China (xushengyang0429@stu.pku.edu.cn).}}
\date{}

\begin{document}

\maketitle

\begin{abstract}
	We construct exact representations of finite element functions of arbitrary polynomial degree on simplicial meshes in any spatial dimension by two-hidden-layer multilayer perceptrons (MLPs). The first hidden layer uses $\mathrm{ReLU}^0+\mathrm{ReLU}$, and the second uses the degree-dependent activation $\mathrm{ReLU}^k$. The neural networks can realize both zero-skeleton and prescribed-skeleton representatives of discontinuous finite element functions, where the former vanish on the mesh skeleton and the latter admit independently prescribed polynomial data on each relatively open lower-dimensional mesh subsimplex; in particular, continuous finite element functions can be represented pointwise on the closed domain. In the continuous case, a half-open decomposition reduces the second-hidden-layer width by assigning each subsimplex to a single incident element. The number of nonzero parameters grows linearly with the number of elements for fixed spatial dimension, polynomial degree, and output dimension, and it is of the same order as the number of global degrees of freedom of the finite element space in the continuous case, provided that the mesh is shape-regular. All parameters in the neural network can be explicitly computed via the information of the mesh and finite element function without training.
\end{abstract}

\section{Introduction}

Neural networks are used in numerical methods for partial differential equations to parameterize trial functions, approximate solution operators, and complement classical discretizations. Their approximation properties have therefore attracted substantial attention, from universal approximation \cite{cybenko1989approximation} to quantitative relations between accuracy and network complexity; see DeVore, Hanin, and Petrova \cite{devore2021approximation} for a comprehensive survey. Finite element functions provide a concrete setting in which to study these relations: their local polynomial structure is explicit, and their approximation properties are well understood. Exact neural representations of these functions connect the two approximation frameworks and make it possible to transfer finite element error estimates to network classes.

Existing results establish this connection for several activation functions and finite element spaces. Arora et al. \cite{arora2018understanding} represented continuous piecewise linear functions by ReLU networks with depth logarithmic in the spatial dimension, and He et al. \cite{he2020relu} studied ReLU representations of linear finite elements and their depth requirements. A two-hidden-layer ReLU construction is available on a two-dimensional uniform mesh \cite{he2022hierarchical}. Jin \cite{jin2026twohidden} obtained two-hidden-layer weak representations of continuous and discontinuous piecewise linear functions on convex polytope meshes, with exact agreement away from arbitrarily thin neighborhoods of the mesh skeleton.

Higher-order approximation and exact polynomial representation provide complementary extensions of this connection. Opschoor, Petersen, and Schwab \cite{opschoor2020highorder} studied deep ReLU approximations of high-order finite element functions and the transfer of their approximation rates to neural networks. For exact polynomial realization, Li, Tang, and Yu \cite{li2020better,li2020powernet} developed constructive representations using rectified power units (RePU), together with approximation estimates for smooth functions. He, Mao, and Xu \cite{he2023expressivity} further studied polynomial expressivity and approximation by ReLU$^k$ networks, including quantitative bounds on network complexity and weights.

For arbitrary polynomial degree and spatial dimension, He and Xu \cite{he2023deep} constructed deeper networks using ReLU and squared-ReLU activations. Opschoor and Schwab \cite{opschoor2024exponential} gave exact emulations of high-order continuous finite element spaces using the same activations, with the number of nonzero parameters proportional to the finite element degrees of freedom on shape-regular meshes.

Discontinuous activations also provide a way to localize functions to mesh elements. Longo, Opschoor, Disch, Schwab, and Zech \cite{longo2023derham} used ReLU and ReLU$^0$ activations to emulate lowest-order spaces in the discrete de Rham complex on general regular simplicial meshes. Their higher-order extension uses ReLU, rectified-power, and ReLU$^0$ activations, gives exact agreement on open elements and hence almost everywhere, and has depth of order $d\log(k+1)$ and size linear in the number of elements for fixed degree and dimension. Their results include compatible vector-valued spaces such as Raviart--Thomas and N\'{e}d\'{e}lec elements.

The present paper combines three properties in a single construction: a uniform exact two-hidden-layer architecture for arbitrary dimension and polynomial degree, sparse storage linear in the number of elements, and parameters computed directly from finite element data. The representation applies to arbitrary conforming simplicial meshes; no uniform mesh structure is required. The number of hidden layers remains two, while the activation in the second layer depends on the polynomial degree. Our contributions are as follows.

\begin{enumerate}
\item \emph{A uniform two-hidden-layer representation.}
The first hidden layer encodes mesh geometry, the second combines localization with polynomial reproduction, and the linear output layer assembles the function. The MLP uses $\mathrm{ReLU}^0+\mathrm{ReLU}$ in its first hidden layer and $\mathrm{ReLU}^k$ in its second, with $\mathrm{ReLU}^0(t)=\mathbf1_{\{t>0\}}$. For the construction, we separate the first activation into two channels and then recover an ordinary MLP by an explicit parameter transformation. All components of a vector- or tensor-valued function share the same hidden features. The resulting networks represent discontinuous finite element functions almost everywhere and continuous finite element functions at every point of the closed domain. More generally, they allow independent polynomial data on all relatively open mesh subsimplices.

\item \emph{Linear sparse storage at fixed depth.}
For fixed spatial dimension, polynomial degree, and output dimension, both hidden-layer widths and sparse storage grow at most linearly with the number of elements. Sharing first-layer features across coplanar facets ties the first-layer width to the number of distinct facet-supporting hyperplanes. For continuous functions, a half-open decomposition avoids separate expansions on lower-dimensional subsimplices and gives the same second-layer width as the zero-skeleton discontinuous construction. In the scalar continuous case with $k\geqslant1$, the retained storage count is comparable to the dimension of finite element space on shape-regular mesh (Corollary~\ref{cor:dof optimality}).

\item \emph{Explicit parameter computation.}
Once the mesh, degree, and geometric assignments are fixed, all hidden weights and biases are independent of the target function. The finite element data enter linearly through the output coefficients. On each element, these coefficients can be recovered from Lagrange nodal values using a matrix that depends only on dimension and degree. Its factorization is reusable across elements and components, so the network can be generated without training or nonlinear parameter fitting. Section~\ref{sec:polynomial representation} gives the coefficient system, and Appendix~\ref{app:coefficient computation} provides a Newton-basis factorization for its evaluation.
\end{enumerate}

The role of skeleton values makes the representation bounds more precise. Let $N_\mathcal T$ be the number of elements, $N_\mathcal S$ the number of all relatively open nonempty mesh subsimplices, and $H_\mathcal T$ the number of distinct unoriented hyperplanes supporting mesh facets; coplanar facets contribute only once. Set $M=\binom{d+k}{d}$. The actual MLP hidden-layer widths are at most $(4H_\mathcal T,MN_\mathcal T)$ for the representative that equals the given element polynomial in each open element and vanishes on the skeleton (Theorem~\ref{thm: fnn interior} and Corollary~\ref{cor: dg multi}). Prescribing polynomial values independently on every subsimplex gives widths $(4H_\mathcal T,MN_\mathcal S)$ (Theorem~\ref{thm:subsimplex representation}); this includes pointwise continuous representation on arbitrary meshes. If the mesh admits the half-open decomposition of Definition~\ref{def:half open decomposition}, continuous functions have a representation with widths $(4H_\mathcal T,MN_\mathcal T)$ (Theorem~\ref{thm:continuous FE representation}). The boundary-aware star condition supplies one sufficient condition for this reduction (Corollary~\ref{cor:continuous star representation}). The two-channel notation used in the proofs counts shared affine preactivations, so its descriptor $\fnn_d^m(k;n_1,n_2)$ corresponds to actual widths $(2n_1,n_2)$.

The vector- and tensor-valued construction applies in particular to Raviart--Thomas, N\'{e}d\'{e}lec, and Hu--Zhang spaces through their inclusion in suitable polynomial DG spaces. Exact agreement almost everywhere preserves their Sobolev classes and interelement traces; assigned pointwise skeleton values are separate data. Exact realization also transfers finite element approximation estimates to neural networks for $1\leqslant p\leqslant\infty$: Corollary~\ref{cor:DG Sobolev approximation} gives broken $W^{s,p}$ estimates on shape-regular meshes, while Corollary~\ref{cor:continuous approximation} gives global $W^{s,p}$ estimates for $s=0,1$ using continuous network approximants on uniform Kuhn meshes. The latter is used to obtain explicit width estimates for the approximation result.

Although the width estimates are constructive upper bounds, two hidden layers are necessary in the worst case in the stated activation model. Indeed, for $d\geqslant2$, a finite-width one-hidden-layer network mixing ReLU$^0$ and ReLU$^k$ activations is, on any given hyperplane, either differentiable almost everywhere or nondifferentiable almost everywhere with respect to $(d-1)$-dimensional surface measure. A continuous piecewise affine finite element function, however, may have both differentiable and nondifferentiable subsets of positive surface measure on the same hyperplane, as illustrated in Figure~\ref{fig:depth counterexample}. Such a function cannot be represented by a one-hidden-layer network, establishing the optimality of the two-hidden-layer depth.

\begin{figure}[htbp]
    \centering
    \includegraphics[width=.5\textwidth]{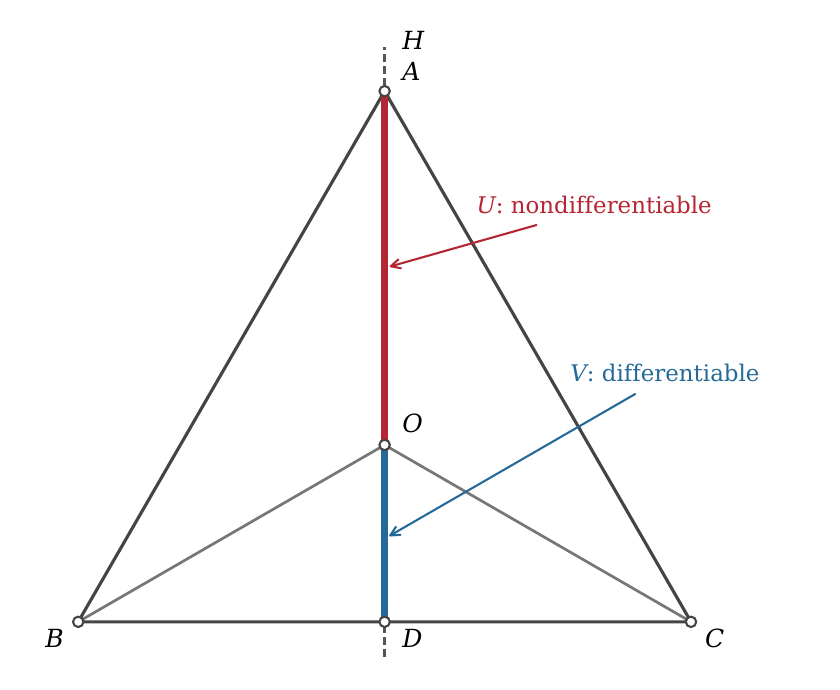}
    \caption{An equilateral triangle $ABC$ split into three subtriangles by joining its center $O$ to the vertices. Let $\phi$ be the continuous piecewise linear nodal function with $\phi(O)=1$ and $\phi(A)=\phi(B)=\phi(C)=0$. The line $H$ through $A$ and $O$ meets $BC$ at $D$. The function $\phi$ is nondifferentiable on $(A,O)$ and differentiable on $(O,D)$. Both segments have positive length.}
    \label{fig:depth counterexample}
\end{figure}

The remainder of the paper is organized as follows. Section~2 introduces the meshes, finite element spaces, network model, and local polynomial basis. Section~3 develops the zero-skeleton and prescribed-skeleton representations, including vector- and tensor-valued examples and broken Sobolev approximation estimates. Section~4 derives continuous representations, sparse-storage bounds, and continuous Sobolev approximation estimates. Section~\ref{sec:conclusion} concludes the paper, and Appendix~\ref{app:coefficient computation} details the local coefficient computation.

\section{Preliminaries}

Throughout the paper, $d\geqslant1$ denotes the spatial dimension and $k\in\Nzero$ the polynomial degree, where $\Nzero=\{0,1,2,\dots\}$. We write
$A\lesssim_{r}B$ if $A\leqslant CB$ for some constant $C>0$
depending only on $r$, and $A\gtrsim_{r}B$ if
$B\lesssim_{r}A$. We write $A\asymp_{r}B$ if both inequalities
hold. Other subscripts are interpreted analogously.

\subsection{Simplicial meshes}\label{sec:simplicial meshes}

Let $\Omega\subset\R^d$ be a bounded open polyhedral set, not necessarily convex or connected. Let $\mathcal T=\{\tau_1,\ldots,\tau_{N_\mathcal T}\}$ be a conforming simplicial mesh of $\Omega$. Each element $\tau_i\subset\Omega$ is an open $d$-simplex, and
\begin{align}
	\bigcup_{\tau\in\mathcal T}\overline\tau=\overline\Omega,
	\qquad
	\tau_i\cap\tau_j=\varnothing\quad\text{for }i\neq j.
\end{align}
Conformity means that $\overline\tau_i\cap\overline\tau_j$ is either empty or a common subsimplex of both elements. In particular, intersections of element closures introduce no hanging nodes. The mesh skeleton is $\partial\mathcal T=\bigcup_{\tau\in\mathcal T}\partial\tau$. We denote by $H_\mathcal T$ the number of distinct unoriented affine hyperplanes supporting the $(d-1)$-dimensional element facets. Coplanar facets contribute only once.

Let $\mathcal S_\mathcal T$ be the collection of all distinct relatively open nonempty subsimplices of the mesh, including the open elements themselves as well as the vertexes. Mesh conformity implies that these subsimplices form a disjoint partition of $\overline\Omega$, and those of dimension less than $d$ form a disjoint partition of the skeleton $\partial\mathcal T$. We write $N_\mathcal S=\abs{\mathcal S_\mathcal T}$ for the total number of these subsimplices. Since every $d$-simplex has $2^{d+1}-1$ nonempty subsimplices, $N_\mathcal S\leqslant(2^{d+1}-1)N_\mathcal T$.

Suppose that $\tau$ is a $d$-dimensional simplex with vertices $\bm{x}_0,\dots,\bm{x}_d$. Classical simplex theory gives the affine representation $\tau=\{\bm x:\bm W\bm x+\bm b>\bm0\}$, where $\bm W\in\R^{(d+1)\times d}$ and $\bm b\in\R^{d+1}$, and the inequality is understood componentwise. Let
\begin{align}
	\bm\lambda^\tau(\bm x)=\bm W\bm x+\bm b,
	\qquad
	\lambda_i^\tau(\bm x)=\bm w_i\cdot\bm x+b_i.
\end{align}
The equation $\lambda_i^\tau(\bm x)=0$ defines the hyperplane containing the facet opposite the vertex $\bm x_i$. We normalize $\lambda_i^\tau$ by requiring $\lambda_i^\tau(\bm x_i)=1$. The functions $\lambda_i^\tau$ are then precisely the barycentric coordinate functions and satisfy $\sum_{i=0}^d\lambda_i^\tau(\bm x)=1$ for every $\bm x\in\R^d$. In terms of barycentric coordinates, every relatively open subsimplex $\sigma$ of $\overline\tau$ is characterized by
\begin{align}\label{eq:subsimplex bcf}
	\sigma=\Big\{\bm x: \sum_{i=0}^d\lambda_i^\tau(\bm x)=1;
	\ \lambda_i^\tau(\bm x)>0\text{ for }i\in\mathcal I_{\tau,\sigma};
	\ \lambda_i^\tau(\bm x)=0\text{ for }i\notin\mathcal I_{\tau,\sigma}\Big\}.
\end{align}
Here $\mathcal I_{\tau,\sigma}\subset\{0,\dots,d\}$ is the set of indices corresponding to the vertices of $\tau$ that belong to $\overline\sigma$.
The superscript $\tau$ on $\bm\lambda^\tau$ will be omitted when no ambiguity can arise.

\subsection{Finite element spaces}\label{sec:finite element spaces}

Throughout the remainder of the paper, let $\mathbb E$ be a finite-dimensional real vector space with $m=\dim\mathbb E\geqslant1$, such as $\R$, $\R^d$, or the space of symmetric $d\times d$ matrices. Fix a basis of $\mathbb E$ and identify it with $\R^m$. We write $\P_k(\tau;\mathbb E)=\P_k(\tau)\otimes\mathbb E$ for the $\mathbb E$-valued polynomials of total degree at most $k$. The scalar case $\mathbb E=\R$ is understood whenever the value space is omitted from the notation.

The usual discontinuous finite element space, whose members are identified up to almost-everywhere equality, is
\begin{align}
	\mathcal V_k^{\mathrm{DG}}(\mathcal T;\mathbb E)
	=
	\left\{\bm v\in L^2(\Omega;\mathbb E):\bm v|_\tau\in\P_k(\tau;\mathbb E)
	\text{ for every }\tau\in\mathcal T\right\}.
\end{align}
These equivalence classes do not prescribe values on the mesh skeleton, whereas a neural network is defined pointwise. We therefore introduce the space
\begin{align}
	\widehat{\mathcal V}_k^{\mathrm{DG}}(\mathcal T;\mathbb E)
	=
	\left\{\bm v:\overline\Omega\to\mathbb E:
	\bm v|_\sigma\in\P_k(\R^d;\mathbb E)|_\sigma
	\text{ for every }\sigma\in\mathcal S_\mathcal T\right\}.
\end{align}
No compatibility is imposed between the polynomials on distinct relatively open subsimplices.

The zero-skeleton representatives form the space
\begin{align}
	\mathring{\mathcal V}_k^{\mathrm{DG}}(\mathcal T;\mathbb E)
	=
	\left\{\bm v:\overline\Omega\to\mathbb E:
	\begin{array}{l}
	\bm v|_\tau\in\P_k(\tau;\mathbb E)\text{ for every }\tau\in\mathcal T,\\
	\bm v|_{\partial\mathcal T}=\bm0
	\end{array}\right\}.
\end{align}
Passing to almost-everywhere equivalence classes gives a linear bijection from $\mathring{\mathcal V}_k^{\mathrm{DG}}(\mathcal T;\mathbb E)$ onto $\mathcal V_k^{\mathrm{DG}}(\mathcal T;\mathbb E)$.

The continuous finite element space is
\begin{align}
	\mathcal V_k(\mathcal T;\mathbb E)
	=
	\left\{\bm v\in C(\overline\Omega;\mathbb E):\bm v|_\tau\in\P_k(\tau;\mathbb E)
	\text{ for every }\tau\in\mathcal T\right\}.
\end{align}
Continuity uniquely determines the skeleton values from the elementwise polynomials. The two pointwise spaces satisfy
\begin{align}
	\mathring{\mathcal V}_k^{\mathrm{DG}}(\mathcal T;\mathbb E),
	\quad \mathcal V_k(\mathcal T;\mathbb E)
	\subset\widehat{\mathcal V}_k^{\mathrm{DG}}(\mathcal T;\mathbb E).
\end{align}

In sums and inclusions involving $L^2$, Sobolev, or DG equivalence-class spaces, we identify pointwise functions with their almost-everywhere classes. Equality within $\widehat{\mathcal V}_k^{\mathrm{DG}}(\mathcal T;\mathbb E)$ remains pointwise.

Each of these four spaces is the tensor product of its scalar counterpart with $\mathbb E$. The $m$ components of an $\mathbb E$-valued function will correspond to the $m$ network outputs in Section~\ref{sec:network model}.

\subsection{Neural network model}\label{sec:network model}

We use the activations
\begin{align}
	\varrho_0(t)=\mathbf 1_{\{t>0\}},
	\qquad
	\varrho_j(t)=(t_+)^j\quad\text{for }j\geqslant1,
\end{align}
where $t_+=\max\{t,0\}$, and all activations are applied componentwise to vectors. Simply put, $\varrho_j(t)$ is exactly the ${\rm ReLU}^j$ for $j\geqslant 0$.
Here $\mathbf 1_E$ denotes the indicator of the set or condition $E$.

\begin{definition}[Two-hidden-layer network class]\label{def:FNN}
For positive integers $d,m,n_1,n_2$ and $k\in\Nzero$, let $\fnn_d^m(k;n_1,n_2)$ denote the class of functions $\bm{\mathcal N}:\R^d\to\R^m$ represented by the two-hidden-layer architecture
\begin{align}\label{eq:FNN architecture}
    \bm{\mathcal N}(\bm x)=\bm W_\III\varrho_k\!\left(
    \bm W_{\II,0}\varrho_0(\bm W_\I\bm x+\bm b_\I)
    +\bm W_{\II,1}\varrho_1(\bm W_\I\bm x+\bm b_\I)
    +\bm b_\II\right),
\end{align}
where $\bm W_\I\in\R^{n_1\times d}$, $\bm b_\I\in\R^{n_1}$, $\bm W_{\II,0},\bm W_{\II,1}\in\R^{n_2\times n_1}$, $\bm b_\II\in\R^{n_2}$, and $\bm W_\III\in\R^{m\times n_2}$. For a function class $\mathcal V$ on a domain $D$, we say that $\fnn_d^m(k;n_1,n_2)$ realizes $\mathcal V$ pointwise on $D$ if every $v\in\mathcal V$ agrees on $D$ with some $\bm{\mathcal N}\in\fnn_d^m(k;n_1,n_2)$.
\end{definition}

To relate this representation to ordinary MLPs, set $\varrho_\ast=\varrho_0+\varrho_1$ and define
\begin{align}\label{eq:uniform activation MLP}
    \mathrm{MLP}_d^m(k;r_1,r_2)
    =\left\{\bm x\mapsto\widetilde{\bm W}_\III\varrho_k\!\left(
    \widetilde{\bm W}_\II\varrho_\ast(\widetilde{\bm W}_\I\bm x+\widetilde{\bm b}_\I)+\widetilde{\bm b}_\II\right)\right\},
\end{align}
where $\widetilde{\bm W}_\I\in\R^{r_1\times d}$, $\widetilde{\bm b}_\I\in\R^{r_1}$, $\widetilde{\bm W}_\II\in\R^{r_2\times r_1}$, $\widetilde{\bm b}_\II\in\R^{r_2}$, and $\widetilde{\bm W}_\III\in\R^{m\times r_2}$ vary freely. The activations are $\varrho_\ast$ and $\varrho_k$ in the first and second hidden layers, respectively, and $(r_1,r_2)$ are their actual widths.

For every $t\in\R$, including $t=0$,
\begin{align}
    \varrho_0(t)=2\varrho_\ast(t)-\varrho_\ast(2t),
    \qquad \varrho_1(t)=\varrho_\ast(2t)-\varrho_\ast(t).
\end{align}
Thus \eqref{eq:FNN architecture} is realized by an MLP with
\begin{gather}
    \widetilde{\bm W}_\I=\begin{pmatrix}\bm W_\I\\2\bm W_\I\end{pmatrix},\quad 
    \widetilde{\bm b}_\I=\begin{pmatrix}\bm b_\I\\2\bm b_\I\end{pmatrix},\notag\\
    \widetilde{\bm W}_\II=\begin{pmatrix}2\bm W_{\II,0}-\bm W_{\II,1}&
    \bm W_{\II,1}-\bm W_{\II,0}\end{pmatrix},\quad
    \widetilde{\bm b}_\II=\bm b_\II,\quad
    \widetilde{\bm W}_\III=\bm W_\III.\label{eq:MLP parameter conversion}
\end{gather}
Consequently, as classes of functions on $\R^d$,
\begin{align}\label{eq:NN MLP inclusion}
    \fnn_d^m(k;n_1,n_2)\subseteq\mathrm{MLP}_d^m(k;2n_1,n_2).
\end{align}
The construction uses paired first-layer parameters, whereas the MLP class allows unrestricted parameters. In the NN descriptor, $n_1$ counts shared affine preactivations, each supplying two activation channels. Its scalar-neuron widths, and those of the constructed MLP, are therefore $(2n_1,n_2)$. In particular, $\fnn_d^m(k;2H_\mathcal T,MN_\mathcal T)$ gives MLP widths $(4H_\mathcal T,MN_\mathcal T)$. We retain the two-channel representation in subsequent proofs. \eqref{eq:MLP parameter conversion} gives all MLP parameters explicitly and preserves sparse-storage scaling up to a constant factor.

From the MLP viewpoint, $\varrho_\ast$ is discontinuous at zero but satisfies
\begin{align}
    \varrho_\ast'(t)=\begin{cases}0,&t<0,\\1,&t>0.\end{cases}
\end{align}
For $k\geqslant1$, at inputs where all hidden preactivations are nonzero, a differentiable loss can therefore be differentiated with respect to the MLP weights and biases by the usual chain rule. Gradients can propagate through both hidden layers, although they may vanish and do not account for jumps across activation thresholds. If the paired parameterization is retained during differentiation, contributions from both members of each pair are combined by the chain rule. For $k=0$, hidden-parameter gradients vanish away from thresholds because the second-layer activation is $\varrho_0$. The representation results below require no gradient-based training: geometry and degree determine the hidden parameters, and local linear algebra determines the output coefficients.

\subsection{Polynomials on a simplex}\label{sec:polynomial representation}

Let $\tau$ be a $d$-dimensional simplex. Define the multi-index set
\begin{align}
	\mathcal A_k^d
	=
	\left\{\bm\alpha\in\Nzero^d:\abs{\bm\alpha}\leqslant k\right\}.
\end{align}
We use ${}^\circ$ for zero extension from $\Nzero^d$ to $\Nzero^{d+1}$, namely $\bm\alpha^\circ=(0,\alpha_1,\dots,\alpha_d)$.
The symbol $\bm1$ denotes the all-ones vector of the dimension determined by the context.
The polynomial space of total degree at most $k$ is
\begin{align}
	\P_k(\tau)
	=
	\operatorname{span}\left\{\bm x^{\bm\alpha}:\bm\alpha\in\mathcal A_k^d\right\}.
\end{align}

To connect $\P_k(\tau)$ with neural networks, we use affine functions that are homogeneous in the barycentric coordinates. For $\bm\beta\in\mathcal A_k^d$, define
\begin{align}
	L_{\bm\beta}^\tau(\bm x)
	&=
	(\bm1+\bm\beta^\circ)\cdot\bm\lambda^\tau(\bm x)\notag\\
	&=
	\lambda_0^\tau(\bm x)
	+
	\sum_{i=1}^d(\beta_i+1)\lambda_i^\tau(\bm x)
	=
	1+\sum_{i=1}^d\beta_i\lambda_i^\tau(\bm x).
\end{align}
Since $\bm\lambda^\tau$ consists of barycentric coordinates,
\begin{align}\label{eq:L bounds}
	1\leqslant L_{\bm\beta}^\tau(\bm x)\leqslant k+1
	\qquad\text{for every }\bm x\in\overline\tau.
\end{align}

The following basis property is a direct consequence of the classical unisolvence of simplex principal lattices; see \cite[Section~2.2]{gasca2000polynomial}.

\begin{lemma}[A polynomial basis of affine powers]\label{lem:k-linear decompose of Pk}
    The functions
    \begin{align}\label{eq:k-linear decompose of Pk}
        \left\{(L_{\bm\beta}^\tau)^k:\bm\beta\in\mathcal A_k^d\right\}
    \end{align}
    form a basis of $\P_k(\tau)$. For $k=0$, this basis consists of the constant function $1$.
\end{lemma}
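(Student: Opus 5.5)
Since $L_{\bm\beta}^\tau$ is affine, each $(L_{\bm\beta}^\tau)^k$ lies in $\P_k(\tau)$. The index set $\mathcal A_k^d$ has $\binom{d+k}{d}=\dim\P_k(\tau)$ elements, so it suffices to prove linear independence, or equivalently spanning. For $k=0$ every power equals $1$ and $\mathcal A_0^d=\{\bm0\}$, so the claim is immediate. Assume $k\geqslant1$ from now on.

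The plan is to reduce to a duality statement. The barycentric map $\bm x\mapsto(\lambda_1^\tau(\bm x),\dots,\lambda_d^\tau(\bm x))$ is an affine bijection of $\R^d$, and it preserves $\P_k$. Writing $\bm y=(\lambda_1,\dots,\lambda_d)$, we have $L_{\bm\beta}=1+\bm\beta\cdot\bm y$. It therefore suffices to show that $\{(1+\bm\beta\cdot\bm y)^k:\bm\beta\in\mathcal A_k^d\}$ spans $\P_k(\R^d)$. Equivalently, after homogenizing with $y_0$, the forms $(y_0+\bm\beta\cdot\bm y)^k$ span the space of homogeneous degree-$k$ polynomials in $d+1$ variables.

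For the duality, use the apolar pairing on homogeneous degree-$k$ forms, $\langle p,q\rangle=p(\partial)q$. Under this pairing, $\langle p,(\bm a\cdot\bm z)^k\rangle=k!\,p(\bm a)$. Suppose a homogeneous $p$ of degree $k$ is orthogonal to all the forms $(y_0+\bm\beta\cdot\bm y)^k$. Then $p$ vanishes at every point $(1,\bm\beta)$ with $\bm\beta\in\mathcal A_k^d$. Dehomogenizing, $p(1,\cdot)\in\P_k(\R^d)$ vanishes on the principal lattice $\{\bm\beta\in\Nzero^d:\abs{\bm\beta}\leqslant k\}$. This lattice is the standard $k$-th principal lattice of the unit simplex, and it is unisolvent for $\P_k$ by \cite[Section~2.2]{gasca2000polynomial}. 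Hence $p(1,\cdot)=0$, and since $p$ is homogeneous, $p=0$. Nondegeneracy of the pairing then gives spanning, and the dimension count gives a basis.

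The main obstacle is invoking unisolvence cleanly. If a self-contained argument is preferred, I would prove it by induction on $d+k$. Restrict a vanishing polynomial to the hyperplane $\abs{\bm y}=k$. There it vanishes on a $(d-1)$-dimensional principal lattice of degree $k$, so by induction it is divisible by $(\abs{\bm y}-k)$. The quotient has degree $k-1$ and vanishes on the lattice $\abs{\bm\beta}\leqslant k-1$, which lets the induction close. The remaining steps, namely the apolarity identity and the affine change of variables, are routine.
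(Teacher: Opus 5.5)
Your proof is correct and follows essentially the paper's route: you reduce to the reference coordinates $\bm y=(\lambda_1^\tau,\dots,\lambda_d^\tau)$ and then invoke unisolvence of the principal lattice $\mathcal A_k^d$. Your apolarity identity $\langle p,(\bm a\cdot\bm z)^k\rangle=k!\,p(\bm a)$ is the coordinate-free form of the paper's multinomial expansion, which writes the coefficient matrix of the powers as $\bm D\bm V$ with $V_{\bm\alpha\bm\beta}=\bm\beta^{\bm\alpha}$; the paper prefers this explicit matrix form because it reuses it in the appendix as $\bm A=\bm V^{\mathrm T}\bm D_k\bm V$.
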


\begin{proof}
The case $k=0$ is immediate. For $k\geqslant1$, let $F_\tau:\widehat\tau\to\tau$ be the affine bijection from the standard simplex, mapping its vertices $\bm0,\bm e_1,\ldots,\bm e_d$ to the ordered vertices $\bm x_0,\ldots,\bm x_d$ of $\tau$. Since $\lambda_i^\tau\circ F_\tau(\widehat{\bm x})=\widehat x_i$ for $1\leqslant i\leqslant d$, the multinomial theorem gives
\begin{align}
    \bigl((L_{\bm\beta}^\tau)^k\circ F_\tau\bigr)(\widehat{\bm x})
    =(1+\bm\beta\cdot\widehat{\bm x})^k
    =\sum_{\bm\alpha\in\mathcal A_k^d}
    \frac{k!}{(k-|\bm\alpha|)!\,\bm\alpha!}
    \bm\beta^{\bm\alpha}\widehat{\bm x}^{\bm\alpha}.
\end{align}
Thus the coefficient matrix in the monomial basis is $\bm D\bm V$, where $\bm D$ is diagonal with positive entries and $V_{\bm\alpha\bm\beta}=\bm\beta^{\bm\alpha}$. Here $V_{\bm\alpha\bm\beta}$ is understood as
$V_{\iota(\bm\alpha),\iota(\bm\beta)}$, where
$\iota:\mathcal A_k^d\to\{1,\ldots,M\}$ is a fixed
total-degree-compatible ordering and $M=\binom{d+k}{d}$. Thus $\iota(\bm\alpha)<\iota(\bm\beta)$ whenever $|\bm\alpha|<|\bm\beta|$. The nodes $\{\bm\beta:\bm\beta\in\mathcal A_k^d\}$ are a dilation of the degree-$k$ principal lattice on the standard simplex and hence are unisolvent for $\P_k(\R^d)$ \cite[Section~2.2]{gasca2000polynomial}. Therefore $\bm V$, the transpose of their evaluation matrix, is invertible. The pulled-back functions therefore form a basis of $\P_k(\widehat\tau)$. Since pullback by $F_\tau$ is an isomorphism from $\P_k(\tau)$ to $\P_k(\widehat\tau)$, the claimed basis property follows.
\end{proof}

For fixed $d$ and $k$, the coefficients in Lemma~\ref{lem:k-linear decompose of Pk} can be recovered from the local Lagrange nodal values by the same reference matrix on every element. For $k\geqslant1$, this system is
\begin{align}
    \bm A\bm u=\bm p,
    \qquad
    A_{\bm\alpha\bm\beta}
    =\left(1+\frac{\bm\alpha\cdot\bm\beta}{k}\right)^k,
    \qquad \bm\alpha,\bm\beta\in\mathcal A_k^d.
\end{align}
Here $\bm p$ contains the nodal values in the ordering used above. $\bm A$ is independent of the position, size, and shape of $\tau$, because the nodes have fixed barycentric coordinates. Its factorization can therefore be reused on every element. For $k=0$, the single coefficient is the constant value of the polynomial. Appendix~\ref{app:coefficient computation} reuses the Vandermonde matrix $\bm V$ above to factor $\bm A$ and gives an explicit Newton-basis triangular factorization of $\bm V$. This yields a coefficient computation using triangular solves, matrix-vector products, and diagonal scaling.

Lagrange nodal values are only one choice for recovering the coefficients; any unisolvent set of linear degrees of freedom on $\P_k(\tau)$ gives an invertible system. The matrix remains independent of the element geometry if these degrees of freedom are defined by pulling each polynomial back to the reference simplex and applying a fixed set of reference degrees of freedom.

\section{Pointwise realization of discontinuous finite element functions}
\subsection{Scalar zero-skeleton representation}\label{sec:zero skeleton representation}

The following lemma is the foundation of our construction:

\begin{lemma}[Localization to open simplices]\label{lem: nn interior}
	For each $\tau\in\mathcal T$ and $\bm\beta\in\mathcal A_k^d$, the second-layer preactivation
	\begin{align}\label{eq:interior localizer}
		f_{\bm\beta}^\tau(\bm x)
		=
		(k+1)\bm1\cdot\varrho_0(\bm\lambda^\tau(\bm x))
		+
		(\bm\beta^\circ-(k+1)\bm1)\cdot
		\varrho_1(\bm\lambda^\tau(\bm x))
		+
		1-(k+1)d,
	\end{align}
	formed from the two first-layer activation channels of the $d+1$ affine preactivations $\bm\lambda^\tau(\bm x)$, satisfies
	\begin{align}
		f_{\bm\beta}^\tau(\bm x)=L_{\bm\beta}^\tau(\bm x)
		\quad\text{for }\bm x\in\tau,
		\qquad
		f_{\bm\beta}^\tau(\bm x)\leqslant0
		\quad\text{for }\bm x\notin\tau.
	\end{align}
\end{lemma}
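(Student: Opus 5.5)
The argument is a direct computation, splitting according to the signs of the barycentric coordinates. Write $\lambda_i=\lambda_i^\tau(\bm x)$ and set $c_i=(k+1)\varrho_0(\lambda_i)+(\beta^\circ_i-(k+1))\varrho_1(\lambda_i)$, so that
\begin{align*}
f_{\bm\beta}^\tau(\bm x)=\sum_{i=0}^d c_i+1-(k+1)d .
\end{align*}

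\emph{Inside $\tau$.} For $\bm x\in\tau$ every $\lambda_i>0$, so $\varrho_0(\lambda_i)=1$ and $\varrho_1(\lambda_i)=\lambda_i$. Summing gives
\begin{align*}
f_{\bm\beta}^\tau(\bm x)=(k+1)(d+1)+\bm\beta^\circ\cdot\bm\lambda-(k+1)\bm1\cdot\bm\lambda+1-(k+1)d .
\end{align*}
Using $\bm1\cdot\bm\lambda=1$, the constants cancel: $(k+1)(d+1)-(k+1)+1-(k+1)d=1$. Hence $f_{\bm\beta}^\tau=1+\bm\beta^\circ\cdot\bm\lambda=L_{\bm\beta}^\tau$.

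\emph{Outside $\tau$.} If $\bm x\notin\tau$, then by $\tau=\{\bm\lambda>\bm0\}$ at least one index $j$ has $\lambda_j\leqslant0$, and so $c_j=0$. I bound each remaining $c_i$ separately.
\begin{itemize}
\item If $\lambda_i\leqslant0$, then $c_i=0$.
\item If $\lambda_i>0$, then $c_i=(k+1)+(\beta^\circ_i-(k+1))\lambda_i$.
\end{itemize}

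The point needing care is that $\lambda_i$ may exceed $1$ outside $\tau$, so a crude bound on $c_i$ is not available. Instead, I use the linear structure with the global identity $\sum_i\lambda_i=1$. Let $P=\{i:\lambda_i>0\}$. Then
\begin{align*}
f_{\bm\beta}^\tau(\bm x)=(k+1)\abs{P}+\sum_{i\in P}\beta^\circ_i\lambda_i-(k+1)\sum_{i\in P}\lambda_i+1-(k+1)d .
\end{align*}
Two facts control the $\lambda$-dependent terms.
\begin{itemize}
\item Since $\sum_{i\notin P}\lambda_i\leqslant0$, we get $\sum_{i\in P}\lambda_i=1-\sum_{i\notin P}\lambda_i\geqslant1$.
\item Since $\beta^\circ_i\leqslant\abs{\bm\beta}\leqslant k$, we get $\sum_{i\in P}\beta^\circ_i\lambda_i\leqslant k\sum_{i\in P}\lambda_i$.
\end{itemize}
Writing $S=\sum_{i\in P}\lambda_i$, these give
\begin{align*}
f_{\bm\beta}^\tau\leqslant(k+1)\abs{P}+kS-(k+1)S+1-(k+1)d=(k+1)\abs{P}-S+1-(k+1)d .
\end{align*}
Now apply $\abs{P}\leqslant d$ and $S\geqslant1$. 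This yields $f_{\bm\beta}^\tau\leqslant(k+1)d-1+1-(k+1)d=0$.

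The bound $\beta^\circ_i\lambda_i\leqslant k\lambda_i$ is valid only for $i\in P$, where $\lambda_i>0$. This is exactly why the sum is restricted to $P$ before the estimate is applied.

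The edge cases cause no trouble. If $P$ is empty, then $\sum_i\lambda_i\leqslant0$, contradicting $\sum_i\lambda_i=1$, so $P$ is never empty. If $\abs{P}<d$, the bound only becomes more negative.

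The main obstacle was the outside case, where the coordinates $\lambda_i$ are unbounded. It is resolved by using $\sum_{i\in P}\lambda_i\geqslant1$ in place of any pointwise bound on individual $\lambda_i$.
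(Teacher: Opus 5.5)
Your proof is correct and follows essentially the same route as the paper, which writes $f_{\bm\beta}^\tau=g_{\bm\beta}^\tau+h^\tau$. It bounds the ReLU part by $-1$ on all of $\R^d$ using $\bm1\cdot\varrho_1(\bm\lambda^\tau)\geqslant1$ and $\beta^\circ_i\leqslant k$, and the ReLU$^0$ part by $1$ outside $\tau$ because at most $d$ indicators equal one; these are exactly your facts $S\geqslant1$ and $\abs{P}\leqslant d$, merged into a single estimate over $P$.
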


\begin{proof}
	Define $g_{\bm\beta}^\tau(\bm x)=(\bm\beta^\circ-(k+1)\bm1)\cdot\varrho_1(\bm\lambda^\tau(\bm x))$. On $\tau$, all barycentric coordinates are positive, so $g_{\bm\beta}^\tau(\bm x)=(\bm\beta^\circ-(k+1)\bm1)\cdot\bm\lambda^\tau(\bm x)=L_{\bm\beta}^\tau(\bm x)-k-2$.

	Let $\bm\lambda^+=\varrho_1(\bm\lambda^\tau)$ and $\bm\lambda^-=\varrho_1(-\bm\lambda^\tau)$. Then $\bm\lambda^\tau=\bm\lambda^+-\bm\lambda^-$ and $\bm1\cdot\bm\lambda^+=1+\bm1\cdot\bm\lambda^-\geqslant1$.
	Since every component of $\bm\beta^\circ-k\bm1$ is nonpositive,
	\begin{align}\label{eq:g global bound}
		g_{\bm\beta}^\tau(\bm x)
		&=
		\bm\lambda^+\cdot(\bm\beta^\circ-k\bm1)
		-
		\bm1\cdot\bm\lambda^+
		\leqslant-1
	\end{align}
	for every $\bm x\in\R^d$.

	Next define $h^\tau(\bm x)=(k+1)\bm1\cdot\varrho_0(\bm\lambda^\tau(\bm x))+1-(k+1)d$.
	On $\tau$, all $d+1$ ReLU$^0$ functions equal one and therefore $h^\tau(\bm x)=k+2$. If $\bm x\in\tau^c$, at least one barycentric coordinate is nonpositive. Since $\varrho_0(0)=0$, at most $d$ of the ReLU$^0$ functions equal one, so $h^\tau(\bm x)\leqslant1$.
	Finally, $f_{\bm\beta}^\tau=g_{\bm\beta}^\tau+h^\tau$. On $\tau$ this gives $f_{\bm\beta}^\tau=L_{\bm\beta}^\tau$, while on $\tau^c$ equation~\eqref{eq:g global bound} and the bound on $h^\tau$ give $f_{\bm\beta}^\tau\leqslant0$.
\end{proof}

The same argument applies to other subsets of $\overline\tau$. Suppose that $R\subset\overline\tau$ is described by $d+1$ binary tests $\chi_i$, so that $\bm x\in R$ if and only if all $\chi_i(\bm x)$ equal one. Then
\begin{align}\label{eq:indicator localizer}
    (\bm\beta^\circ-(k+1)\bm1)\cdot\varrho_1(\bm\lambda^\tau(\bm x))
    +(k+1)\sum_{i=0}^d{\chi_i^\tau}(\bm x)+1-(k+1)d
\end{align}
equals $L_{\bm\beta}^\tau$ on $R$ and is nonpositive outside $R$. Indeed, the ReLU term equals $L_{\bm\beta}^\tau-k-2$ on $R$ and is at most $-1$ everywhere by \eqref{eq:g global bound}. The sum of the tests is $d+1$ on $R$ and at most $d$ elsewhere. This formula will be used in the proofs of Lemmas~\ref{lem:single subsimplex localizer} and~\ref{lem:assigned element localizer} with suitable choices of the tests ${\chi_i^\tau}$.

The construction above is local to each simplex. In particular, it does not require $\Omega$ to be convex, connected, or star-shaped.

\begin{theorem}[Scalar representation]\label{thm: fnn interior}
	Let $M=\binom{d+k}{d}=\dim\P_k(\R^d)$. Then ${\fnn}_d^1(k;2H_\mathcal T,MN_\mathcal T)$ realizes $\mathring{\mathcal V}_k^{\mathrm{DG}}(\mathcal T)$ pointwise on $\overline\Omega$. For fixed $d$ and $k$, all hidden parameters are determined by the mesh, and only the output coefficients vary with $v$.
\end{theorem}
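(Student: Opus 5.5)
The plan is to assemble the network element by element from Lemma~\ref{lem: nn interior} and Lemma~\ref{lem:k-linear decompose of Pk}. We then share first-layer neurons across elements whose facets are coplanar.

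\emph{First layer.} For each of the $H_\mathcal T$ distinct facet-supporting hyperplanes, fix one affine function $\ell_H(\bm x)=\bm w_H\cdot\bm x+b_H$ vanishing on $H$. We use the two preactivations $\ell_H$ and $-\ell_H$, which gives $n_1=2H_\mathcal T$ rows of $\bm W_\I,\bm b_\I$. For each element $\tau$ and each $i$, the barycentric coordinate $\lambda_i^\tau$ vanishes on the hyperplane $H$ of the facet opposite $\bm x_i$. Hence $\lambda_i^\tau=c\,\ell_H$ for some nonzero scalar $c$, and after replacing $\ell_H$ by $-\ell_H$ if necessary we may take $c>0$. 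Since $\varrho_0$ and $\varrho_1$ are positively homogeneous of degrees $0$ and $1$, we get $\varrho_0(\lambda_i^\tau)=\varrho_0(\pm\ell_H)$ and $\varrho_1(\lambda_i^\tau)=c\,\varrho_1(\pm\ell_H)$ exactly, including at zero. Therefore every channel $\varrho_0(\lambda^\tau_i)$ and $\varrho_1(\lambda^\tau_i)$ appearing in \eqref{eq:interior localizer} is a fixed linear combination of the first-layer outputs. The scaling $c$ is absorbed into the entries of $\bm W_{\II,1}$. This is the step that requires care: both signs are needed because different elements lie on opposite sides of the same hyperplane. It is exactly why $n_1=2H_\mathcal T$ rather than $H_\mathcal T$.

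\emph{Second layer.} For each pair $(\tau,\bm\beta)$ with $\tau\in\mathcal T$ and $\bm\beta\in\mathcal A_k^d$, create one neuron whose preactivation is $f^\tau_{\bm\beta}$ from \eqref{eq:interior localizer}, written through the shared channels above. This gives $n_2=MN_\mathcal T$. We apply $\varrho_k$. By Lemma~\ref{lem: nn interior}, on $\tau$ we have $\varrho_k(f^\tau_{\bm\beta})=(L^\tau_{\bm\beta})^k$, because $L^\tau_{\bm\beta}\geqslant1>0$ by \eqref{eq:L bounds}. Off $\tau$ we have $f^\tau_{\bm\beta}\leqslant0$, so $\varrho_k(f^\tau_{\bm\beta})=0$; for $k=0$ this uses $\varrho_0(0)=0$ and $f\leqslant 0$. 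All these parameters depend only on the mesh and on $k$.

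\emph{Output layer.} Given $v\in\mathring{\mathcal V}_k^{\mathrm{DG}}(\mathcal T)$, Lemma~\ref{lem:k-linear decompose of Pk} lets us write $v|_\tau=\sum_{\bm\beta}u^\tau_{\bm\beta}(L^\tau_{\bm\beta})^k$ on each $\tau$. We set $\bm W_\III$ to have the entries $u^\tau_{\bm\beta}$. We then verify pointwise on $\overline\Omega$. A point in an open element $\tau$ lies outside every other element, because the open elements are disjoint. So only the neurons of $\tau$ contribute, and the output equals $v(\bm x)$ there. A point on $\partial\mathcal T$ lies in no open element, so every neuron vanishes and the output is $0=v(\bm x)$.

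The main obstacle is the hyperplane-sharing bookkeeping in the first step. The rest follows directly from the two lemmas.
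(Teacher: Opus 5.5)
Your proof is correct and follows the paper's route. You expand each $v|_\tau$ in the basis $(L^\tau_{\bm\beta})^k$ and localize each term with $\varrho_k(f^\tau_{\bm\beta})$ from Lemma~\ref{lem: nn interior}. You share first-layer preactivations by including both orientations of every facet hyperplane, absorbing positive rescalings into the second-layer weights; the difference from the paper is only presentational, since you build the shared first layer directly rather than stacking the per-element maps $\bm\lambda^\tau$ and then merging duplicates.
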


In particular, the theorem includes $k=0$, in which case $M=1$ and the second hidden layer uses $\varrho_0$.

\begin{proof}
	Let $v\in\mathring{\mathcal V}_k^{\mathrm{DG}}(\mathcal T)$. For every $\tau\in\mathcal T$, Lemma~\ref{lem:k-linear decompose of Pk} gives coefficients $u_{\bm\beta}^\tau$ such that
	\begin{align}
		v|_\tau
		=
		\sum_{\bm\beta\in\mathcal A_k^d}
		u_{\bm\beta}^\tau(L_{\bm\beta}^\tau)^k.
	\end{align}
	By Lemma~\ref{lem: nn interior},
	\begin{align}\label{eq:zero skeleton representation}
		v(\bm x)
		=
		\sum_{\tau\in\mathcal T}
		\sum_{\bm\beta\in\mathcal A_k^d}
		u_{\bm\beta}^\tau
		\varrho_k(f_{\bm\beta}^\tau(\bm x)).
	\end{align}
	Indeed, if $\bm x\in\tau$, the summands associated with $\tau$ reproduce $v|_\tau$, whereas all summands associated with other elements vanish. If $\bm x\in\partial\mathcal T$, every localizer is nonpositive, so the right-hand side vanishes, as required. This argument also applies when $k=0$ because $\varrho_0(t)=0$ for every $t\leqslant0$.

	To write \eqref{eq:zero skeleton representation} in network form, stack the elementwise affine maps as
	\begin{align}
		\bm W_\I
		=
		\begin{pmatrix}
		\bm W_\I^{\tau_1}\\
		\bm W_\I^{\tau_2}\\
		\vdots\\
		\bm W_\I^{\tau_{N_\mathcal T}}
		\end{pmatrix},
		\qquad
		\bm b_\I
		=
		\begin{pmatrix}
		\bm b_\I^{\tau_1}\\
		\bm b_\I^{\tau_2}\\
		\vdots\\
		\bm b_\I^{\tau_{N_\mathcal T}}
		\end{pmatrix}.
	\end{align}
	Here $\bm{W}_\I^\tau \bm{x} + \bm{b}_\I^\tau = \bm{\lambda}^\tau(\bm x)$. For $i\in\{0,1\}$, let
	\begin{align}
		\bm W_{\II,i}
		=
		\begin{pmatrix}
		\bm W_{\II,i}^{\tau_1}&&&\\
		&\bm W_{\II,i}^{\tau_2}&&\\
		&&\ddots&\\
		&&&\bm W_{\II,i}^{\tau_{N_\mathcal T}}
		\end{pmatrix},
	\end{align}
	where the rows of $\bm W_{\II,0}^\tau$ and $\bm W_{\II,1}^\tau$ are, respectively, the coefficients of the $\varrho_0$ and $\varrho_1$ channels in \eqref{eq:interior localizer}, ordered by $\bm\beta\in\mathcal A_k^d$. Stack the constant terms from \eqref{eq:interior localizer} into $\bm b_\II$, and define
	\begin{align}
		\bm W_\III
		=
		\begin{pmatrix}
		\bm W_\III^{\tau_1}&
		\bm W_\III^{\tau_2}&
		\cdots&
		\bm W_\III^{\tau_{N_\mathcal T}}
		\end{pmatrix},
		\qquad
		\bm W_\III^\tau
		=
		\begin{pmatrix}
		u_{\bm\beta_1}^\tau&\cdots&u_{\bm\beta_M}^\tau
		\end{pmatrix}.
	\end{align}
	For fixed $d$ and $k$, the matrices $\bm W_\I$, $\bm W_{\II,i}$, $i=0,1$, and the biases $\bm b_\I$, $\bm b_\II$ are determined by the mesh. Only $\bm W_\III$ changes with $v$.

	Before removing repetitions, $\bm W_\I\in\R^{N_\mathcal T(d+1)\times d}$ and $\bm W_{\II,i}\in\R^{MN_\mathcal T\times N_\mathcal T(d+1)}$, and
	\begin{align}
		v(\bm x)
		=
		\bm W_\III\varrho_k\!\left(
		\bm W_{\II,0}\varrho_0(\bm W_\I\bm x+\bm b_\I)
		+
		\bm W_{\II,1}\varrho_1(\bm W_\I\bm x+\bm b_\I)
		+
		\bm b_\II
		\right).
	\end{align}
    The second hidden layer has width $MN_\mathcal T$. For each unoriented mesh hyperplane, include both affine orientations in the first-layer preactivation vector. Every barycentric coordinate used by an element is a positive multiple of one of these oriented affine functions. Repeated functions can be merged, with the positive scaling absorbed into the corresponding second-layer weights. Hence at most $2H_\mathcal T$ distinct affine preactivations are required.
\end{proof}

\begin{remark}[Function-dependent width reduction]
    The widths in Theorem~\ref{thm: fnn interior} are sufficient for the stated representation; they are not asserted to be minimal. The construction uses hidden parameters prescribed by the mesh, with only the output weights depending on the target function. Allowing the hidden parameters to depend on that function can reduce the required width, as the following example for $k=1$ shows.

    Let $v\in\mathring{\mathcal V}_1^{\mathrm{DG}}(\mathcal T)$ and let $p_\tau$ be its polynomial on $\tau$. For a real number $r$, set $r^+=\max\{r,0\}$ and $r^-=\max\{-r,0\}$. Define affine functions $p_\tau^\pm$ as
    \begin{align}
        p_\tau^\pm(\bm x)
        :=\sum_{i=0}^d \bigl(p_\tau(\bm x_i)\bigr)^\pm
        \lambda_i^\tau(\bm x).
    \end{align}
    Here $p_\tau^\pm$ interpolate the positive and negative parts of the vertex values, rather than denote the pointwise positive and negative parts of $p_\tau$. Then $p_\tau=p_\tau^+-p_\tau^-$ and $p_\tau^\pm\geqslant0$ on $\overline\tau$.
    
    Fix $\varepsilon>0$ and set $q^\pm=p_\tau^\pm+\varepsilon$. For either $q=q^+$ or $q=q^-$, write $q_i=q(\bm x_i)>0$, choose $C\geqslant1+\max_iq_i$, and define
    \begin{align}
        F_q^\tau(\bm x)
        =\sum_{i=0}^d(q_i-C)\varrho_1(\lambda_i^\tau(\bm x))
        +(C-1)\sum_{i=0}^d\varrho_0(\lambda_i^\tau(\bm x))
        +1-d(C-1).
    \end{align}
    On $\tau$, substitution gives $F_q^\tau=q$. Outside $\tau$, the ReLU term is at most $-1$ and at most $d$ ReLU$^0$ functions equal one, so $F_q^\tau\leqslant0$. Hence
    \begin{align}
        \varrho_1(F_q^\tau)&=q\,\mathbf1_\tau,\\
        p_\tau\mathbf1_\tau
        &=\varrho_1(F_{q^+}^\tau)-\varrho_1(F_{q^-}^\tau).
    \end{align}
    Summing over the elements gives second-layer width $2N_\mathcal T$, with the same $2H_\mathcal T$ shared first-layer preactivations. The reduction from $(d+1)N_\mathcal T$ is strict when $d\geqslant2$, but the second-layer weights now depend on the target function. Thus Theorem~\ref{thm: fnn interior} provides an explicit width bound with reusable hidden features, rather than a lower bound for arbitrary representations.
\end{remark}

\subsection{Vector- and tensor-valued finite elements}\label{sec:tensor finite elements}

The hidden features in Theorem~\ref{thm: fnn interior} are independent of the represented function. They can therefore be shared by all components of a vector- or tensor-valued function.

\begin{corollary}[Multicomponent representation]\label{cor: dg multi}
	Let $M=\binom{d+k}{d}$. Then ${\fnn}_d^m(k;2H_\mathcal T,MN_\mathcal T)$ realizes $\mathring{\mathcal V}_k^{\mathrm{DG}}(\mathcal T;\mathbb E)$ pointwise on $\overline\Omega$. For fixed $d$ and $k$, all hidden parameters depend only on the mesh, and only the output coefficients vary with the represented function.
\end{corollary}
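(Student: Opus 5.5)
The plan is to reduce the statement to the scalar result, Theorem~\ref{thm: fnn interior}, one component at a time. The key point is that all hidden parameters constructed there are independent of the represented function.

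Using the fixed basis of $\mathbb E$ identified with $\R^m$, I write $\bm v\in\mathring{\mathcal V}_k^{\mathrm{DG}}(\mathcal T;\mathbb E)$ as $\bm v=(v_1,\dots,v_m)$. Each component $v_j$ belongs to the scalar space $\mathring{\mathcal V}_k^{\mathrm{DG}}(\mathcal T)$. Indeed, $\bm v|_\tau\in\P_k(\tau)\otimes\mathbb E$ means that each coordinate of $\bm v|_\tau$ lies in $\P_k(\tau)$, and $\bm v|_{\partial\mathcal T}=\bm0$ gives $v_j|_{\partial\mathcal T}=0$. This is just the tensor-product statement made at the end of Section~\ref{sec:finite element spaces}.

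Next I apply the construction in the proof of Theorem~\ref{thm: fnn interior} to each $v_j$. The matrices $\bm W_\I$, $\bm b_\I$, $\bm W_{\II,0}$, $\bm W_{\II,1}$, $\bm b_\II$ are built only from the barycentric coordinates $\bm\lambda^\tau$ and the localizers $f_{\bm\beta}^\tau$ of Lemma~\ref{lem: nn interior}. The same holds after merging to $2H_\mathcal T$ oriented hyperplane preactivations with the positive scalings absorbed into the second layer. So these matrices coincide for all $j$. Lemma~\ref{lem:k-linear decompose of Pk} gives coefficients $u^{\tau}_{\bm\beta,j}$ with $v_j|_\tau=\sum_{\bm\beta}u^\tau_{\bm\beta,j}(L^\tau_{\bm\beta})^k$. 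The identity \eqref{eq:zero skeleton representation} then holds for each $j$ on all of $\overline\Omega$: it reproduces $v_j$ on open elements and gives zero on the skeleton.

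Finally I stack the $m$ row vectors into the output matrix $\bm W_\III\in\R^{m\times MN_\mathcal T}$, whose $j$-th row is the scalar $\bm W_\III$ belonging to $v_j$. The resulting network lies in $\fnn_d^m(k;2H_\mathcal T,MN_\mathcal T)$, and its $j$-th output equals $v_j$ pointwise on $\overline\Omega$. The hidden widths do not grow with $m$, because the hidden features are shared across components. The only dependence on $\bm v$ is through $\bm W_\III$, which is linear in the finite element data.

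There is no serious obstacle here. The one point to verify carefully is that the merging step in the proof of Theorem~\ref{thm: fnn interior} is genuinely function-independent. It is: the positive multiples relating barycentric coordinates to the chosen oriented hyperplane functions depend only on the mesh, so they enter $\bm W_{\II,i}$ and not $\bm W_\III$.
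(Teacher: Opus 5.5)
Your proposal is correct and follows the paper's proof: you split $\bm v$ into scalar components in $\mathring{\mathcal V}_k^{\mathrm{DG}}(\mathcal T)$, note that the hidden parameters from Theorem~\ref{thm: fnn interior} depend only on the mesh, and stack the $m$ scalar output rows into $\bm W_\III$. You also check that the positive rescalings from hyperplane merging go into $\bm W_{\II,i}$ rather than $\bm W_\III$; the paper leaves this implicit, and your check is correct.
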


\begin{proof}
    {Write $\bm v=(v_1,\dots,v_m)$} with $v_i\in\mathring{\mathcal V}_k^{\mathrm{DG}}(\mathcal T)$. By Theorem~\ref{thm: fnn interior}, the scalar components share the hidden parameters $\bm W_\I$, $\bm W_{\II,0}$, $\bm W_{\II,1}$, $\bm b_\I$, and $\bm b_\II$. Only their output rows $\bm W_\III^i$ differ. Stack these rows as
    \begin{align}
        \bm W_{\III} = \begin{pmatrix}
            \bm W_\III^1\\
            \bm W_\III^2\\
            \vdots\\
            \bm W_\III^m
        \end{pmatrix}.
    \end{align}
    Then 
    \begin{align}
        \bm v(\bm x)
		=
		\bm W_\III\varrho_k\!\left(
		\bm W_{\II,0}\varrho_0(\bm W_\I\bm x+\bm b_\I)
		+
		\bm W_{\II,1}\varrho_1(\bm W_\I\bm x+\bm b_\I)
		+
		\bm b_\II
		\right).
    \end{align}
\end{proof}

Corollary~\ref{cor: dg multi} gives an almost-everywhere exact representation of every function in $\mathcal V_k^{\mathrm{DG}}(\mathcal T;\mathbb E)$. More generally, it applies to any finite element subspace
\begin{align}
	X_h\subset\mathcal V_k^{\mathrm{DG}}(\mathcal T;\mathbb E),
\end{align}
including spaces with additional local constraints or interelement trace conditions, provided $k$ bounds the actual polynomial degree of their local shape functions. For $H(\operatorname{div})$- or $H(\operatorname{curl})$-conforming spaces, choosing the zero-skeleton representative preserves the Sobolev class and its traces. Its assigned skeleton values, however, need not equal those traces. For vector fields one takes $\mathbb E=\R^d$; for symmetric tensor fields one takes the space of symmetric $d\times d$ matrices, with $m=d(d+1)/2$. Neither the output dimension nor such constraints require duplicating the hidden features.

The following examples make the required polynomial degree and output dimension explicit. All elements are affine simplices. We use $r$ for the finite element index and reserve $k$ for the degree in Corollary~\ref{cor: dg multi}; $\widetilde{\P}_r$ denotes the homogeneous polynomials of degree $r$. No essential boundary conditions are imposed in the spaces below.
\begin{itemize}
\item \emph{Raviart--Thomas elements.}
For $r\geqslant0$, the local and global spaces are
\begin{align}
\mathrm{RT}_r(\tau)
&=\P_r(\tau;\R^d)\oplus\bm x\,\widetilde{\P}_r(\tau),\\
\mathrm{RT}_r(\mathcal T)
&=\left\{\bm v\in H(\operatorname{div};\Omega):
\bm v|_\tau\in\mathrm{RT}_r(\tau)\text{ for every }\tau\in\mathcal T\right\}.
\end{align}
The global space has continuous normal traces across interior facets; see \cite{raviart1977mixed,nedelec1980mixed}. Since $\mathrm{RT}_r(\tau)\subset\P_{r+1}(\tau;\R^d)$,
\begin{align}
\mathrm{RT}_r(\mathcal T)\subset
\mathcal V_{r+1}^{\mathrm{DG}}(\mathcal T;\R^d).
\end{align}
Thus Corollary~\ref{cor: dg multi} applies with $(k, m)=(r+1, d)$.

\item \emph{N\'{e}d\'{e}lec elements.}
In dimensions $d=2,3$, the first- and second-family local spaces can be written as
\begin{align}
\mathrm{N}_r^{\mathrm I}(\tau)
&=\P_r(\tau;\R^d)\oplus
\left\{\bm p\in\widetilde{\P}_{r+1}(\tau;\R^d):
\bm p(\bm x)\cdot\bm x=0\right\},\quad r\geqslant0,\\
\mathrm{N}_r^{\mathrm{II}}(\tau)
&=\P_r(\tau;\R^d),\quad r\geqslant1.
\end{align}
For either family $a\in\{\mathrm I,\mathrm{II}\}$, the global space is
\begin{align}
\mathrm{N}_r^a(\mathcal T)
=\left\{\bm v\in H(\operatorname{curl};\Omega):
\bm v|_\tau\in\mathrm{N}_r^a(\tau)\text{ for every }\tau\in\mathcal T\right\}.
\end{align}
These spaces have continuous tangential traces \cite{nedelec1980mixed,nedelec1986new}. The two families are contained in $\mathcal V_{r+1}^{\mathrm{DG}}(\mathcal T;\R^d)$ and $\mathcal V_r^{\mathrm{DG}}(\mathcal T;\R^d)$, respectively. Hence the required pairs $(k,m)$ are $(r+1,d)$ and $(r,d)$. The first-family index used here starts at $r=0$; conventions that start at order one shift this index by one. Higher-dimensional counterparts are described by polynomial differential forms, with the same componentwise representation principle \cite{arnold2010feec}.

\item \emph{Hu--Zhang symmetric stress elements.}
Let $\mathbb S=\{A\in\R^{d\times d}:A^{\mathrm T}=A\}$. For $r\geqslant d+1$, the Hu--Zhang construction and its extension to arbitrary dimension \cite{huzhang2014triangular,hu2015higher} use the local space $\P_r(\tau;\mathbb S)$. Define its zero-traction bubble subspace by
\begin{align}
B_r(\tau)
=\left\{\bm\eta\in\P_r(\tau;\mathbb S):
\bm\eta\bm n=0\text{ on }\partial\tau\right\},
\end{align}
where $\bm n$ is the outward unit normal on each facet. The global stress space has the structure
\begin{align}
\Sigma_r^{\mathrm{HZ}}(\mathcal T)
&=\mathcal V_r(\mathcal T;\mathbb S)
+\bigl\{\bm\eta_b\in\mathcal V_r^{\mathrm{DG}}(\mathcal T;\mathbb S): \bm\eta_b|_\tau\in B_r(\tau)\text{ for every }\tau\in\mathcal T\bigr\}\notag\\
&\subset H(\operatorname{div};\Omega;\mathbb S)
\cap\mathcal V_r^{\mathrm{DG}}(\mathcal T;\mathbb S).
\end{align}
The continuous part and the zero-traction bubbles ensure that $\bm\eta\bm n$ is continuous across interior facets. Corollary~\ref{cor: dg multi} therefore applies with $(k, m)=(r, d(d+1)/2)$. The accompanying displacement space is $\mathcal V_{r-1}^{\mathrm{DG}}(\mathcal T;\R^d)$, represented with $(k,m)=(r-1,d)$. For the lower-order construction with $1\leqslant r\leqslant d$, the additional facet bubbles in \cite{huzhang2016lower} have degree at most $d+1$; the enriched stress space is consequently contained in $\mathcal V_{d+1}^{\mathrm{DG}}(\mathcal T;\mathbb S)$ and is covered by taking $k=d+1$.
\end{itemize}

The exact representation of discontinuous finite element functions transfers elementwise polynomial approximation estimates to the network class. We equip $\mathbb E$ with the Euclidean norm in the chosen basis, denoted by $\norm{\cdot}_{\mathbb E}$, and use the corresponding norms and seminorms on $L^p(\Omega;\mathbb E)$ and $W^{s,p}(\Omega;\mathbb E)$. For an integer $s\geqslant0$, define the broken Sobolev norm by
\begin{align}\label{eq:broken Sobolev norm}
	\norm{\bm w}_{W^{s,p}(\mathcal T;\mathbb E)}
	:=
	\begin{cases}
		\displaystyle
		\left(
		\sum_{\tau\in\mathcal T}
		\norm{\bm w}_{W^{s,p}(\tau;\mathbb E)}^p
		\right)^{1/p},
		& 1\leqslant p<\infty,\\[2ex]
		\displaystyle
		\max_{\tau\in\mathcal T}
		\norm{\bm w}_{W^{s,\infty}(\tau;\mathbb E)},
		& p=\infty.
	\end{cases}
\end{align}
All derivatives in this norm are taken within individual elements.

\begin{corollary}[Broken Sobolev approximation]
	\label{cor:DG Sobolev approximation}
	Let $k\geqslant0$, $1\leqslant p\leqslant\infty$, and let $1\leqslant r\leqslant k+1$ be an integer. Let $M=\binom{d+k}{d}$ and $\{\mathcal T_h\}$ be a shape-regular family of conforming simplicial meshes of a bounded polyhedral domain $\Omega$, with $h=\max_{\tau\in\mathcal T_h}\operatorname{diam}(\tau) \leqslant1$. For every $\bm v\in W^{r,p}(\Omega;\mathbb E)$, there exists a network $\bm{\mathcal N}\in \fnn_d^m\left(k;2H_{\mathcal T_h},MN_{\mathcal T_h}\right)$ such that, for any $s=0,\ldots,r$,
	\begin{align}\label{eq:DG Sobolev approximation}
		\norm{\bm v-\bm{\mathcal N}}_{W^{s,p}(\mathcal T_h;\mathbb E)}
		\leqslant
		C h^{r-s}\abs{\bm v}_{W^{r,p}(\Omega;\mathbb E)}.
	\end{align}
	The constant $C$ depends only on $d$, $k$, $r$, $p$, $m$, and the shape-regularity bound, and is independent of $h$ and $\bm v$.
\end{corollary}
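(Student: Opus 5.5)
The plan is to build an elementwise polynomial approximant $\bm v_h\in\mathcal V_k^{\mathrm{DG}}(\mathcal T_h;\mathbb E)$ and then invoke Corollary~\ref{cor: dg multi}. That corollary produces a network $\bm{\mathcal N}\in\fnn_d^m(k;2H_{\mathcal T_h},MN_{\mathcal T_h})$ that agrees with the zero-skeleton representative of $\bm v_h$ on every open element $\tau$. Since the broken norm~\eqref{eq:broken Sobolev norm} only involves the restrictions to open elements, we get $\norm{\bm v-\bm{\mathcal N}}_{W^{s,p}(\mathcal T_h;\mathbb E)}=\norm{\bm v-\bm v_h}_{W^{s,p}(\mathcal T_h;\mathbb E)}$. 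The skeleton values of $\bm{\mathcal N}$ are irrelevant, because the skeleton is a null set and derivatives are taken only inside elements. The whole task therefore reduces to a classical local polynomial approximation estimate.

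For each $\tau$, I will take $\bm v_h|_\tau=\Pi_\tau\bm v$ componentwise. Here $\Pi_\tau$ is the averaged Taylor polynomial of degree $r-1\leqslant k$ (Dupont--Scott / Brenner--Scott) with respect to a ball of radius comparable to the inradius of $\tau$. Because of the averaging, this is well defined for any $\bm v\in W^{r,p}(\tau)$ with $r\geqslant1$, even when $p=1$, and no point values are required. An alternative is the $L^2(\tau)$-projection onto $\P_{r-1}$, which reproduces polynomials and is $W^{s,p}$-stable on the reference simplex.

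\textbf{Key steps.}
\begin{enumerate}
\item On the reference simplex $\widehat\tau$, the Bramble--Hilbert lemma gives $\abs{\widehat v-\widehat\Pi\widehat v}_{W^{s,p}(\widehat\tau)}\lesssim\abs{\widehat v}_{W^{r,p}(\widehat\tau)}$ for $0\leqslant s\leqslant r$. This uses that $\widehat\Pi$ reproduces $\P_{r-1}$ and is bounded on $W^{r,p}$.
\item The affine map $F_\tau$ is used to scale back to $\tau$. Shape regularity gives $\norm{\bm B_\tau}\lesssim h_\tau$ and $\norm{\bm B_\tau^{-1}}\lesssim h_\tau^{-1}$, which yields the local seminorm estimate
\begin{align}
\abs{\bm v-\Pi_\tau\bm v}_{W^{s,p}(\tau)}\lesssim h_\tau^{r-s}\abs{\bm v}_{W^{r,p}(\tau)}.
\end{align}
Summing over $j=0,\dots,s$ and using $h_\tau\leqslant h\leqslant1$ (so that $h_\tau^{r-j}\leqslant h^{r-s}$ for $j\leqslant s$) turns this into the full-norm bound.
\item The local bounds are summed in $p$-th powers, or a maximum is taken when $p=\infty$. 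Since the elements are disjoint and cover $\Omega$ up to a null set, $\sum_\tau\abs{\bm v}_{W^{r,p}(\tau)}^p=\abs{\bm v}_{W^{r,p}(\Omega)}^p$.
\item The constants are collected. The Euclidean norm on $\mathbb E$ and componentwise application produce the dependence on $m$. The Bramble--Hilbert constant depends on $d,r,p$. The scaling constants depend on the shape-regularity bound. The degree $k$ enters only through the inclusion $r-1\leqslant k$.
\end{enumerate}

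\textbf{Main obstacle.} The step needing care is the uniformity of the constant. The operator $\Pi_\tau$ must be defined via pull-back from a fixed reference operator, or via a ball chunkiness parameter controlled by shape regularity, so that one constant serves every element and every $h$. Once this is settled, the remaining steps are routine.
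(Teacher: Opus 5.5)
Your proposal is correct and follows essentially the same route as the paper. Both arguments use elementwise averaged Taylor polynomials of degree $r-1\leqslant k$ with a Bramble--Hilbert estimate whose constant is made uniform by shape regularity, take the zero-skeleton representative realized exactly by Corollary~\ref{cor: dg multi}, and sum the local bounds using $h_\tau^{r-j}\leqslant h^{r-s}$. The only difference is that you sketch the reference-element scaling argument, whereas the paper cites it directly as \cite[Lemma~4.3.8]{brenner2008mathematical}.
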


\begin{proof}
	For each $\tau\in\mathcal T_h$, apply the averaged Taylor polynomial construction and the Bramble--Hilbert estimate in \cite[Lemma~4.3.8]{brenner2008mathematical} componentwise. Writing $h_\tau=\operatorname{diam}(\tau)$, we obtain $\bm q_\tau\in\mathbb P_{r-1}(\tau;\mathbb E)$ such that
	\begin{align}\label{eq:local DG polynomial approximation}
		\abs{\bm v-\bm q_\tau}_{W^{j,p}(\tau;\mathbb E)}
		\leqslant
		C h_\tau^{r-j}
		\abs{\bm v}_{W^{r,p}(\tau;\mathbb E)},
		\qquad j=0,\ldots,r,
	\end{align}
	where the order-zero seminorm denotes the $L^p$ norm. These estimates hold for all $1\leqslant p\leqslant\infty$. Shape regularity makes the constant uniform over the mesh family.
	
	Since $r-1\leqslant k$, define $\bm v_h\in\mathring{\mathcal V}_k^{\mathrm{DG}} (\mathcal T_h;\mathbb E)$ by setting $\bm v_h|_\tau=\bm q_\tau$ on each open element and $\bm v_h=0$ on the mesh skeleton. Corollary~\ref{cor: dg multi} provides a network that agrees pointwise with $\bm v_h$ on $\overline\Omega$. In particular, its restriction to each open element equals $\bm q_\tau$, together with all elementwise weak	derivatives.
	
	For $0\leqslant j\leqslant s\leqslant r$, the inequalities $h_\tau\leqslant h\leqslant1$ give $h_\tau^{r-j}\leqslant h^{r-s}$. Combining \eqref{eq:local DG polynomial approximation} for $j=0,\ldots,s$ and summing over the elements yields \eqref{eq:DG Sobolev approximation} when $p<\infty$. For $p=\infty$, the same argument uses the maximum over the elements. The assigned skeleton values do not affect these broken Sobolev norms.
\end{proof}

\subsection{Pointwise representation with prescribed skeleton values}\label{sec:hat DG representation}

To represent prescribed polynomial values on the skeleton as well, we now consider $\widehat{\mathcal V}_k^{\mathrm{DG}}(\mathcal T;\mathbb E)$. Its polynomial data on distinct relatively open subsimplices may be chosen independently. We localize a polynomial to each such subsimplex and sum the resulting contributions. The essential building block is the following localization result.

\begin{lemma}[Localization to subsimplices]\label{lem:single subsimplex localizer}
	Let $\sigma$ be a relatively open subsimplex of $\overline\tau$, where $\tau$ is a $d$-simplex, and let $\bm\beta\in\mathcal A_k^d$. There exists a second-layer preactivation formed from the two activation channels of the $2(d+1)$ first-layer affine preactivations $\bm\lambda^\tau(\bm x)$ and $-\bm\lambda^\tau(\bm x)$,
	\begin{align}
		f_{\bm\beta}^{\tau,\sigma}(\bm x)
		={}&
		(k+1)\sum_{i\in\mathcal I_{\tau,\sigma}}
		\varrho_0(\lambda_i^\tau(\bm x))
		+(k+1)\sum_{i\notin\mathcal I_{\tau,\sigma}}
		\left(
		1-
		\varrho_0(\lambda_i^\tau(\bm x))-
		\varrho_0(-\lambda_i^\tau(\bm x))
		\right)\notag\\
		&+
		(\bm\beta^\circ-(k+1)\bm1)
		\cdot\varrho_1(\bm\lambda^\tau(\bm x))
		+1-(k+1)d,
	\end{align}
	such that
	\begin{align}
		f_{\bm\beta}^{\tau,\sigma}(\bm x)
		=L_{\bm\beta}^\tau(\bm x)
		\quad\text{for }\bm x\in\sigma,
		\qquad
		f_{\bm\beta}^{\tau,\sigma}(\bm x)
		\leqslant0
		\quad\text{for }\bm x\notin\sigma.
	\end{align}
Consequently,
	\begin{align}\label{eq:single subsimplex basis localizer}
		\varrho_k(f_{\bm\beta}^{\tau,\sigma}(\bm x))
		=
		\begin{cases}
		(L_{\bm\beta}^\tau(\bm x))^k,&\bm x\in\sigma,\\
		0,&\bm x\notin\sigma.
		\end{cases}
	\end{align}
\end{lemma}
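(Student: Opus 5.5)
The plan is to apply the general indicator-localizer principle \eqref{eq:indicator localizer}, with binary tests chosen to carve out $\sigma$ inside $\overline\tau$. I would set $\chi_i^\tau=\varrho_0(\lambda_i^\tau)$ for $i\in\mathcal I_{\tau,\sigma}$ and $\chi_i^\tau=1-\varrho_0(\lambda_i^\tau)-\varrho_0(-\lambda_i^\tau)$ for $i\notin\mathcal I_{\tau,\sigma}$. First I would check that each test is genuinely binary. Since $\varrho_0(0)=0$, the second kind equals $\mathbf 1_{\{\lambda_i^\tau=0\}}$, because for $t\neq0$ exactly one of $\varrho_0(t),\varrho_0(-t)$ equals one. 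Hence all $d+1$ tests equal one exactly when $\lambda_i^\tau>0$ for $i\in\mathcal I_{\tau,\sigma}$ and $\lambda_i^\tau=0$ otherwise. The constraint $\sum_i\lambda_i^\tau=1$ holds identically on $\R^d$, so by \eqref{eq:subsimplex bcf} this set is precisely $\sigma$. In particular $R=\sigma\subset\overline\tau$, as \eqref{eq:indicator localizer} requires.

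Next I would verify the two claimed properties directly, following the proof of Lemma~\ref{lem: nn interior}. On $\sigma$ every $\lambda_i^\tau\geqslant0$, so $\varrho_1(\bm\lambda^\tau)=\bm\lambda^\tau$. The ReLU term therefore equals $(\bm\beta^\circ-(k+1)\bm1)\cdot\bm\lambda^\tau=L_{\bm\beta}^\tau-(k+1)$. This uses $\bm1\cdot\bm\lambda^\tau=1$, and note that the paper's ``$-k-2$'' presumably includes a bookkeeping constant, so I will recheck the constant explicitly. The test sum is $(k+1)(d+1)$, and adding $1-(k+1)d$ gives $L_{\bm\beta}^\tau+1$, which is off by one. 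I therefore expect to recompute the constant carefully so that the expression equals $L_{\bm\beta}^\tau$. Most likely the ReLU term equals $L_{\bm\beta}^\tau-k-2$, because $\bm1\cdot\bm\lambda=1$ gives $\bm\beta^\circ\cdot\bm\lambda-(k+1)$ and $L_{\bm\beta}=1+\bm\beta^\circ\cdot\bm\lambda$, so that $\bm\beta^\circ\cdot\bm\lambda-(k+1)=L_{\bm\beta}-k-2$. With that value the total is $L_{\bm\beta}^\tau-k-2+(k+1)(d+1)+1-(k+1)d=L_{\bm\beta}^\tau$, consistent with the statement.

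For $\bm x\notin\sigma$, I would use the global bound \eqref{eq:g global bound}: the ReLU term is at most $-1$ everywhere on $\R^d$. Since $\bm x\notin\sigma$, at least one test vanishes, so the test part is at most $(k+1)d+1-(k+1)d=1$. The sum is then $\leqslant0$. Finally, $\varrho_k(t)=0$ for $t\leqslant0$ (including $k=0$, where $\varrho_0(0)=0$). Together with $L_{\bm\beta}^\tau\geqslant1>0$ on $\sigma\subset\overline\tau$ from \eqref{eq:L bounds}, this gives \eqref{eq:single subsimplex basis localizer}.

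The only delicate point is confirming the binary nature of the zero-tests at $\lambda_i^\tau=0$. This depends on the convention $\varrho_0(0)=0$, and it requires that both $\lambda_i^\tau$ and $-\lambda_i^\tau$ be available as first-layer preactivations. That is why the statement allows $2(d+1)$ affine preactivations rather than $d+1$. Everything else is the bookkeeping already carried out in Lemma~\ref{lem: nn interior}.
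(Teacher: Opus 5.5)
Your proposal is correct and follows the paper's proof. It uses the same $d+1$ binary tests in the indicator-localizer formula \eqref{eq:indicator localizer}, the global bound \eqref{eq:g global bound} off $\sigma$, and $L_{\bm\beta}^\tau\geqslant1$ on $\sigma\subset\overline\tau$ for the final $\varrho_k$ step. The one slip is your first evaluation of the ReLU term as $L_{\bm\beta}^\tau-(k+1)$; you correctly fix it to $L_{\bm\beta}^\tau-k-2$ using $L_{\bm\beta}^\tau=1+\bm\beta^\circ\cdot\bm\lambda^\tau$, so the paper's constant needs no hidden bookkeeping term.
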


\begin{proof}
    Use the tests
    \begin{align}
        {\chi_i^\tau}(\bm x)=
        \begin{cases}
            \varrho_0(\lambda_i^\tau(\bm x)),&i\in\mathcal I_{\tau,\sigma},\\
            1-\varrho_0(\lambda_i^\tau(\bm x))
              -\varrho_0(-\lambda_i^\tau(\bm x)),&i\notin\mathcal I_{\tau,\sigma}.
        \end{cases}
    \end{align}
    The first case tests $\lambda_i^\tau>0$ and the second tests $\lambda_i^\tau=0$. By \eqref{eq:subsimplex bcf}, all $d+1$ tests equal one exactly on $\sigma$. The common ReLU term is
    \begin{align}\label{eq:subsimplex g bound}
        g_{\bm\beta}^{\tau,\sigma}
        =(\bm\beta^\circ-(k+1)\bm1)\cdot\varrho_1(\bm\lambda^\tau)
        \leqslant-1
        \quad\text{on }\R^d.
    \end{align}
    Thus \eqref{eq:indicator localizer} gives the stated values of $f_{\bm\beta}^{\tau,\sigma}$. Since $L_{\bm\beta}^\tau\geqslant1$ on $\overline\tau$ and $\varrho_k(t)=0$ for $t\leqslant0$, \eqref{eq:single subsimplex basis localizer} follows, including when $k=0$.
\end{proof}

We now assign every relatively open subsimplex $\sigma$ to an element $\tau$ and apply Lemma~\ref{lem:single subsimplex localizer} separately to each subsimplex.

\begin{theorem}[Prescribed-skeleton representation]\label{thm:subsimplex representation}
	{Set $M=\binom{d+k}{d}$.} Then ${\fnn}_d^m(k;2H_\mathcal T,MN_\mathcal S)$ realizes $\widehat{\mathcal V}_k^{\mathrm{DG}}(\mathcal T;\mathbb E)$ pointwise on $\overline\Omega$. For fixed $d$ and $k$, the hidden parameters depend only on the mesh and a fixed choice of an incident element for each subsimplex; only the output coefficients vary with the represented function. Moreover, $MN_\mathcal S\leqslant M(2^{d+1}-1)N_\mathcal T$.
\end{theorem}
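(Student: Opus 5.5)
The plan follows the proof of Theorem~\ref{thm: fnn interior}, with Lemma~\ref{lem:single subsimplex localizer} in place of Lemma~\ref{lem: nn interior}. Write $\bm v\in\widehat{\mathcal V}_k^{\mathrm{DG}}(\mathcal T;\mathbb E)$ componentwise, and treat the scalar case first. For each $\sigma\in\mathcal S_\mathcal T$ we fix once and for all an incident element $\tau(\sigma)\in\mathcal T$ with $\sigma\subset\overline{\tau(\sigma)}$; this exists because every mesh subsimplex is a face of some element. By definition, $v|_\sigma=p_\sigma|_\sigma$ for some $p_\sigma\in\P_k(\R^d)$. Restricting $p_\sigma$ to $\tau(\sigma)$ and applying Lemma~\ref{lem:k-linear decompose of Pk} gives coefficients $u_{\bm\beta}^\sigma$ with $p_\sigma=\sum_{\bm\beta}u_{\bm\beta}^\sigma(L_{\bm\beta}^{\tau(\sigma)})^k$ on $\tau(\sigma)$. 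Since both sides are polynomials, this identity holds on all of $\R^d$, and in particular on $\sigma$. We then set
\begin{align}
	v(\bm x)=\sum_{\sigma\in\mathcal S_\mathcal T}\sum_{\bm\beta\in\mathcal A_k^d}u_{\bm\beta}^\sigma\,\varrho_k\bigl(f_{\bm\beta}^{\tau(\sigma),\sigma}(\bm x)\bigr).
\end{align}

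To verify this identity at $\bm x\in\overline\Omega$, we use that $\mathcal S_\mathcal T$ partitions $\overline\Omega$, so $\bm x$ lies in exactly one $\sigma$. By \eqref{eq:single subsimplex basis localizer}, the summands attached to that $\sigma$ reproduce $p_\sigma(\bm x)=v(\bm x)$. Every other summand vanishes, because $\bm x\notin\sigma'$ forces $f^{\tau(\sigma'),\sigma'}_{\bm\beta}(\bm x)\leqslant0$. This argument also covers $k=0$. For the vector-valued case we repeat the construction for each component with the same hidden layers and stack the output rows, exactly as in Corollary~\ref{cor: dg multi}.

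It remains to put the representation in network form and count widths. The second layer has one neuron per pair $(\sigma,\bm\beta)$, so its width is $MN_\mathcal S$. Each preactivation $f_{\bm\beta}^{\tau,\sigma}$ is an affine combination of $\varrho_0(\pm\lambda_i^\tau)$ and $\varrho_1(\lambda_i^\tau)$; the constant terms, such as the $(k+1)\cdot1$, go into $\bm b_\II$. Every $\pm\lambda_i^\tau$ is a positive multiple of one of the two orientations of the affine function supporting some facet hyperplane. Thus the first layer consists of these $2H_\mathcal T$ oriented preactivations.

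The main obstacle, and the only delicate step, is merging repeated features under positive rescaling. We have $\varrho_0(c\ell)=\varrho_0(\ell)$ and $\varrho_1(c\ell)=c\,\varrho_1(\ell)$ for $c>0$. So the weight of a $\varrho_0$ channel transfers unchanged, while the weight of a $\varrho_1$ channel is multiplied by $c$. The $\varrho_0(-\lambda_i)$ test needs the opposite orientation, and this is precisely why both orientations are included. The hidden parameters then depend only on the mesh and the chosen assignment $\sigma\mapsto\tau(\sigma)$, while the $u^\sigma_{\bm\beta}$ enter only through $\bm W_\III$. Finally, $N_\mathcal S\leqslant(2^{d+1}-1)N_\mathcal T$ was noted in Section~\ref{sec:simplicial meshes}, which gives the stated bound $MN_\mathcal S\leqslant M(2^{d+1}-1)N_\mathcal T$.
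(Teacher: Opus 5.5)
Your proposal is correct and follows essentially the same route as the paper. You fix an incident element $\tau(\sigma)$ for each subsimplex, expand an extension of $v|_\sigma$ in the basis $(L_{\bm\beta}^{\tau(\sigma)})^k$, localize with Lemma~\ref{lem:single subsimplex localizer}, and count widths using the disjoint partition $\mathcal S_\mathcal T$ of $\overline\Omega$ and merged oriented hyperplane features. The one minor difference is that the paper fixes a \emph{linear} extension from $\P_k(\R^d)|_\sigma$ to $\P_k(\R^d)$ so the output coefficients depend linearly on the data; your arbitrary choice of $p_\sigma$ already suffices for the statement as written.
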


\begin{proof}
	For each $\sigma\in\mathcal S_\mathcal T$, fix an element $\tau_\sigma\in\mathcal T$ such that $\sigma\subset\overline{\tau_\sigma}$. If $\sigma$ is itself a $d$-simplex, take $\tau_\sigma=\sigma$. {Let $\bm v\in\widehat{\mathcal V}_k^{\mathrm{DG}}(\mathcal T;\mathbb E)$.} For each $\sigma$, fix a linear extension from $\P_k(\R^d)|_\sigma$ to $\P_k(\R^d)$ and apply it componentwise to $\bm v|_\sigma$. Such an extension exists: choose a basis of the restriction space and fix a polynomial extension of each basis function. Denote the resulting polynomial by $\bm p_\sigma\in\P_k(\R^d;\R^m)$. Then $\bm p_\sigma|_\sigma=\bm v|_\sigma$, and its coefficients depend linearly on the data on $\sigma$. No agreement with the data on other subsimplices is required. Applying Lemma~\ref{lem:k-linear decompose of Pk} componentwise on $\tau_\sigma$ supplies coefficients $\bm u_{\bm\beta}^\sigma\in\R^m$ such that
	\begin{align}
		\bm p_\sigma(\bm x)
		=
		\sum_{\bm\beta\in\mathcal A_k^d}
		\bm u_{\bm\beta}^\sigma
		(L_{\bm\beta}^{\tau_\sigma}(\bm x))^k.
	\end{align}
	For every $\bm x\in\overline\Omega$, define
	\begin{align}\label{eq:subsimplex representation}
		\bm{\mathcal N}(\bm x)
		=
		\sum_{\sigma\in\mathcal S_\mathcal T}
		\sum_{\bm\beta\in\mathcal A_k^d}
		\bm u_{\bm\beta}^\sigma
		\varrho_k\!\left(
		f_{\bm\beta}^{\tau_\sigma,\sigma}(\bm x)
		\right).
	\end{align}
	The relatively open subsimplices form a disjoint partition of $\overline\Omega$. Hence $\bm x$ belongs to a unique $\sigma_{\bm x}\in\mathcal S_\mathcal T$. By Lemma~\ref{lem:single subsimplex localizer}, all terms in \eqref{eq:subsimplex representation} associated with $\sigma\neq\sigma_{\bm x}$ vanish, whereas the terms associated with $\sigma_{\bm x}$ give $\bm{\mathcal N}(\bm x)=\bm p_{\sigma_{\bm x}}(\bm x)=\bm v(\bm x)$.
	Thus the representation is pointwise exact on $\overline\Omega$, including the entire interior and exterior mesh skeleton.

	It remains to count the layer widths. For every unoriented mesh hyperplane, include its two affine orientations in the first-layer preactivation vector. After positive rescaling and merging duplicates, this requires at most $2H_\mathcal T$ affine preactivations. Every function $f_{\bm\beta}^{\tau_\sigma,\sigma}$ is an affine combination of the $\varrho_0$ and $\varrho_1$ channels of these preactivations, together with a bias. Therefore one second-layer neuron computes each quantity $\varrho_k(f_{\bm\beta}^{\tau_\sigma,\sigma})$, and there are $M$ choices of $\bm\beta$ for each of the $N_\mathcal S$ subsimplices. The second hidden layer consequently has width $MN_\mathcal S$, independently of $m$. The coefficient vectors $\bm u_{\bm\beta}^\sigma$ form the columns of the output matrix $\bm W_\III\in\R^{m\times MN_\mathcal S}$. Thus all components share the same hidden features, and the output layer forms the required vector-valued linear combination. The bound on $N_\mathcal S$ follows from Section~\ref{sec:simplicial meshes}.
\end{proof}

For compatible finite elements, this theorem can also realize a chosen representative whose skeleton values are polynomial on each subsimplex. Such pointwise data must be specified consistently as a single value on each subsimplex. The theorem does not identify arbitrary skeleton assignments with the one-sided polynomial traces of incident elements; any required agreement with normal, tangential, or traction traces must be imposed on the chosen data.

\section{Pointwise realization of continuous finite element functions}\label{sec:general continuous construction}

\subsection{Continuous representations}\label{sec:continuous representations}

The continuous representation and approximation results below apply to scalar-, vector-, and tensor-valued functions. All components share the same hidden features, so the hidden-layer widths are independent of $m$; only the output dimension changes. The storage analysis in Section~\ref{sec:storage} is restricted to the scalar case $m=1$.

The pointwise representation of continuous finite element functions on a general mesh follows directly from Theorem~\ref{thm:subsimplex representation}, because their restrictions to all relatively open subsimplices are polynomials of the prescribed degree.

\begin{corollary}[Continuous representation]\label{cor:continuous subsimplex representation}
	Let $M=\binom{d+k}{d}$. Then ${\fnn}_d^m(k;2H_\mathcal T,MN_\mathcal S)$ realizes $\mathcal V_k(\mathcal T;\mathbb E)$ pointwise on $\overline\Omega$, including all skeleton values. Moreover, $MN_\mathcal S\leqslant M(2^{d+1}-1)N_\mathcal T$.
\end{corollary}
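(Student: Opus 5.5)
The plan is to show the inclusion $\mathcal V_k(\mathcal T;\mathbb E)\subset\widehat{\mathcal V}_k^{\mathrm{DG}}(\mathcal T;\mathbb E)$ recorded in Section~\ref{sec:finite element spaces}, and then apply Theorem~\ref{thm:subsimplex representation} directly. Let $\bm v\in\mathcal V_k(\mathcal T;\mathbb E)$ and fix any $\sigma\in\mathcal S_\mathcal T$. If $\sigma$ is an open element $\tau$, then $\bm v|_\sigma\in\P_k(\tau;\mathbb E)$ by definition, so it is the restriction of a polynomial in $\P_k(\R^d;\mathbb E)$.

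If $\sigma$ has dimension less than $d$, choose an element $\tau$ with $\sigma\subset\overline\tau$; such an element exists because the closures of the elements cover $\overline\Omega$. Let $\bm p_\tau\in\P_k(\R^d;\mathbb E)$ be the polynomial extension of $\bm v|_\tau$. Since $\bm v$ is continuous on $\overline\Omega$ and agrees with the continuous function $\bm p_\tau$ on $\tau$, the two coincide on $\overline\tau$, and in particular on $\sigma$. Hence $\bm v|_\sigma=\bm p_\tau|_\sigma\in\P_k(\R^d;\mathbb E)|_\sigma$.

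Together these cases give $\bm v\in\widehat{\mathcal V}_k^{\mathrm{DG}}(\mathcal T;\mathbb E)$. Theorem~\ref{thm:subsimplex representation} then provides a network in $\fnn_d^m(k;2H_\mathcal T,MN_\mathcal S)$ that agrees with $\bm v$ at every point of $\overline\Omega$, including all skeleton values. The width bound $MN_\mathcal S\leqslant M(2^{d+1}-1)N_\mathcal T$ is carried over from that theorem, which in turn rests on the count $N_\mathcal S\leqslant(2^{d+1}-1)N_\mathcal T$ from Section~\ref{sec:simplicial meshes}.

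There is no real obstacle. The only point needing care is the continuity argument: the value on a lower-dimensional subsimplex is the limit from the interior of $\tau$, so it equals the value of the fixed polynomial $\bm p_\tau$. The value does not depend on which incident element is chosen, because $\bm v$ is single-valued on $\overline\Omega$.
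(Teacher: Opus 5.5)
Your proposal is correct and matches the paper's proof, which applies Theorem~\ref{thm:subsimplex representation} through the inclusion $\mathcal V_k(\mathcal T;\mathbb E)\subset\widehat{\mathcal V}_k^{\mathrm{DG}}(\mathcal T;\mathbb E)$; your continuity argument just spells that inclusion out. One small correction: an element $\tau$ with $\sigma\subset\overline\tau$ exists because every mesh subsimplex is by definition a face of some element, whereas the fact that the element closures cover $\overline\Omega$ only places each individual point of $\sigma$ in some closure.
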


\begin{proof}
	Apply Theorem~\ref{thm:subsimplex representation} and use the inclusion $\mathcal V_k(\mathcal T;\mathbb E)\subset\widehat{\mathcal V}_k^{\mathrm{DG}}(\mathcal T;\mathbb E)$ from Section~\ref{sec:finite element spaces}.
\end{proof}

\paragraph{A smaller construction via half-open decompositions.}
\begin{definition}[Half-open decomposition]\label{def:half open decomposition}
For each $\tau\in\mathcal T$, choose complementary index sets $\mathcal I_\tau^+$ and $\mathcal I_\tau^-$, either of which may be empty, such that
$\mathcal I_\tau^+\sqcup\mathcal I_\tau^-=\{0,\ldots,d\}$, and set
\begin{align}\label{eq:assigned element characterization}
    \widetilde\tau
    =
    \Big\{\bm x\in\R^d:
    \ \lambda_i^\tau(\bm x)\geqslant0
    \text{ for }i\in\mathcal I_\tau^+;
    \ \lambda_i^\tau(\bm x)>0
    \text{ for }i\in\mathcal I_\tau^-
    \Big\}.
\end{align}
Thus $\widetilde\tau$ is obtained from $\overline\tau$ by removing the closed facets indexed by $\mathcal I_\tau^-$. The family $\{\widetilde\tau:\tau\in\mathcal T\}$ is called a half-open decomposition of $\mathcal T$ if
\begin{align}\label{eq:half open decomposition}
    \overline\Omega=\bigsqcup_{\tau\in\mathcal T}\widetilde\tau.
\end{align}
\end{definition}

Since incident element polynomials agree on shared faces, a half-open
decomposition lets each element polynomial supply the values on its
assigned boundary faces as well as its interior. Thus no separate
expansions on lower-dimensional subsimplices are needed, reducing the
second-layer width from $MN_\mathcal S$ to $MN_\mathcal T$.
We first construct this representation using \eqref{eq:indicator localizer},
then give a sufficient geometric condition for the decomposition to exist.

\begin{lemma}[Localization to half-open simplices]\label{lem:assigned element localizer}
	For each $\tau\in\mathcal T$ and $\bm\beta\in\mathcal A_k^d$, define
	\begin{align}
		\chi_i^{\tau}(\bm x)
		=
		\begin{cases}
			1-\varrho_0(-\lambda_i^\tau(\bm x)),
			&i\in\mathcal I_{\tau}^+,\\
			\varrho_0(\lambda_i^\tau(\bm x)),
			&i\in\mathcal I_{\tau}^-
		\end{cases}{.}
	\end{align}
	The corresponding second-layer preactivation is
	\begin{align}\label{eq:assigned element localizer}
		f_{\bm\beta}^{\tau}(\bm x)
		=
		(\bm\beta^\circ-(k+1)\bm1)
		\cdot\varrho_1(\bm\lambda^\tau(\bm x))
		+(k+1)\sum_{i=0}^d
		\chi_i^{\tau}(\bm x)
		+1-(k+1)d.
	\end{align}
	It satisfies
	\begin{align}\label{eq:assigned element localizer property}
		f_{\bm\beta}^{\tau}(\bm x)
		=
		L_{\bm\beta}^\tau(\bm x)
		\quad\text{for }\bm x\in\widetilde\tau,
		\qquad
		f_{\bm\beta}^{\tau}(\bm x)
		\leqslant0
		\quad\text{for }\bm x\notin\widetilde\tau.
	\end{align}
\end{lemma}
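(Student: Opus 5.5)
The plan is to reduce the lemma to the general indicator-localizer principle \eqref{eq:indicator localizer}, exactly as was done for Lemma~\ref{lem:single subsimplex localizer}. That principle needs two facts: the tests $\chi_i^\tau$ take values in $\{0,1\}$, and all $d+1$ of them equal one precisely on the target set $R=\widetilde\tau$. It also needs $R\subset\overline\tau$, which holds by \eqref{eq:assigned element characterization}, since $\widetilde\tau$ is cut out by $\lambda_i^\tau\geqslant0$ for all $i$, with some inequalities strict.

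First I check the binary tests. For $i\in\mathcal I_\tau^+$ we have $\chi_i^\tau=1-\varrho_0(-\lambda_i^\tau)$. Since $\varrho_0(-t)=\mathbf 1_{\{t<0\}}$, this is the indicator of $\{\lambda_i^\tau\geqslant0\}$, including at $\lambda_i^\tau=0$ because $\varrho_0(0)=0$. For $i\in\mathcal I_\tau^-$ we have $\chi_i^\tau=\varrho_0(\lambda_i^\tau)=\mathbf 1_{\{\lambda_i^\tau>0\}}$. Both are $\{0,1\}$-valued, and comparing with \eqref{eq:assigned element characterization} shows that all of them equal one exactly on $\widetilde\tau$. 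I also note that every term in $f^\tau_{\bm\beta}$ is an affine combination of the $\varrho_0$ and $\varrho_1$ channels of $\pm\bm\lambda^\tau$, so it is an admissible second-layer preactivation.

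Next I verify the two claimed properties directly, since the argument is short. On $\widetilde\tau\subset\overline\tau$, every $\lambda_i^\tau\geqslant0$, so $\varrho_1(\bm\lambda^\tau)=\bm\lambda^\tau$. The ReLU term is then $(\bm\beta^\circ-(k+1)\bm1)\cdot\bm\lambda^\tau=L_{\bm\beta}^\tau-(k+1)$, using $\bm1\cdot\bm\lambda^\tau=1$. The test sum is $(k+1)(d+1)$. Adding $1-(k+1)d$ gives
\begin{align}
L_{\bm\beta}^\tau-(k+1)+(k+1)(d+1)+1-(k+1)d=L_{\bm\beta}^\tau+1.
\end{align}
This is off by one, which is the main point needing care. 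I would therefore recheck the bookkeeping against the proof of Lemma~\ref{lem: nn interior}, where the ReLU term was computed as $L_{\bm\beta}^\tau-k-2$. Indeed,
\begin{align}
(\bm\beta^\circ-(k+1)\bm1)\cdot\bm\lambda^\tau=\bm\beta^\circ\cdot\bm\lambda^\tau-(k+1),
\qquad
L_{\bm\beta}^\tau=1+\bm\beta^\circ\cdot\bm\lambda^\tau,
\end{align}
so the ReLU term equals $L_{\bm\beta}^\tau-k-2$, and the total is $L_{\bm\beta}^\tau-k-2+(k+1)(d+1)+1-(k+1)d=L_{\bm\beta}^\tau$. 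This is the identity on $\widetilde\tau$.

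Off $\widetilde\tau$, at least one test vanishes, so the test sum is at most $(k+1)d$ and the constant part contributes at most $1$. The ReLU term is at most $-1$ on all of $\R^d$ by \eqref{eq:g global bound}, whose proof used only $\bm1\cdot\varrho_1(\bm\lambda^\tau)\geqslant1$ and the nonpositivity of $\bm\beta^\circ-k\bm1$, not the tests. Hence $f_{\bm\beta}^\tau\leqslant0$ outside $\widetilde\tau$.

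I expect no real obstacle here. The only delicate points are the convention $\varrho_0(0)=0$, which makes the closed test $1-\varrho_0(-\lambda_i^\tau)$ correct on the facet itself, and keeping the constant bookkeeping consistent with Lemma~\ref{lem: nn interior}.
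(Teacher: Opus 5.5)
Your proof is correct and follows the paper's route: you identify the $\chi_i^\tau$ as the indicators of $\{\lambda_i^\tau\geqslant0\}$ and $\{\lambda_i^\tau>0\}$, note that all $d+1$ tests equal one exactly on $\widetilde\tau\subset\overline\tau$, and invoke \eqref{eq:indicator localizer} together with the global bound \eqref{eq:g global bound}. Your explicit check of the constants is sound once the initial slip is discarded: the ReLU term on $\widetilde\tau$ is $L_{\bm\beta}^\tau-k-2$ (because $L_{\bm\beta}^\tau=1+\bm\beta^\circ\cdot\bm\lambda^\tau$), not $L_{\bm\beta}^\tau-(k+1)$, so a final write-up should state the correct value directly rather than the self-corrected one.
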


\begin{proof}
	The identities $1-\varrho_0(-t)=\mathbf1_{\{t\geqslant0\}}$ and $\varrho_0(t)=\mathbf1_{\{t>0\}}$ show that the $\chi_i^{\tau}$ test the conditions in \eqref{eq:assigned element characterization}. All $d+1$ tests equal one exactly on $\widetilde\tau\subset\overline\tau$. Formula~\eqref{eq:assigned element localizer} is therefore a case of \eqref{eq:indicator localizer}, which gives both claims.
\end{proof}

\begin{theorem}[Continuous representation via half-open decompositions]\label{thm:continuous FE representation}
	Let $M=\binom{d+k}{k}$, and suppose that $\mathcal T$ admits a half-open decomposition as in Definition~\ref{def:half open decomposition}. Then $\fnn_d^m(k;2H_\mathcal T,MN_\mathcal T)$ realizes $\mathcal V_k(\mathcal T;\mathbb E)$ pointwise on $\overline\Omega$. For fixed $d$ and $k$, the hidden parameters depend only on the mesh and the chosen half-open decomposition. All components share these parameters, and only the output coefficients vary with the represented function.
\end{theorem}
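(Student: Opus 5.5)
The plan is to mirror the proof of Theorem~\ref{thm: fnn interior}, replacing the open-simplex localizers of Lemma~\ref{lem: nn interior} with the half-open localizers of Lemma~\ref{lem:assigned element localizer}. Let $\bm v\in\mathcal V_k(\mathcal T;\mathbb E)$. For each $\tau\in\mathcal T$, let $\bm p_\tau\in\P_k(\R^d;\R^m)$ be the polynomial extension of $\bm v|_\tau$. Lemma~\ref{lem:k-linear decompose of Pk}, applied componentwise, gives coefficients $\bm u_{\bm\beta}^\tau\in\R^m$ with $\bm p_\tau=\sum_{\bm\beta}\bm u_{\bm\beta}^\tau(L_{\bm\beta}^\tau)^k$ as polynomials on all of $\R^d$. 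We then define
\begin{align}
	\bm{\mathcal N}(\bm x)=\sum_{\tau\in\mathcal T}\sum_{\bm\beta\in\mathcal A_k^d}\bm u_{\bm\beta}^\tau\,\varrho_k\!\left(f_{\bm\beta}^\tau(\bm x)\right),
\end{align}
with $f_{\bm\beta}^\tau$ from \eqref{eq:assigned element localizer}.

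Pointwise exactness follows from the disjoint union \eqref{eq:half open decomposition}. Each $\bm x\in\overline\Omega$ lies in exactly one $\widetilde\tau$. Since $\widetilde\tau\subset\overline\tau$, the bound \eqref{eq:L bounds} gives $L_{\bm\beta}^\tau(\bm x)\geqslant1>0$. By \eqref{eq:assigned element localizer property}, the summands of that $\tau$ therefore give $\sum_{\bm\beta}\bm u_{\bm\beta}^\tau(L_{\bm\beta}^\tau(\bm x))^k=\bm p_\tau(\bm x)$. All other summands vanish, because $f_{\bm\beta}^{\tau'}\leqslant0$ off $\widetilde{\tau'}$; this also covers $k=0$.

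The one genuinely new step is showing $\bm p_\tau(\bm x)=\bm v(\bm x)$ when $\bm x\in\widetilde\tau\setminus\tau$, i.e. $\bm x$ lies on the closed boundary of $\tau$. Here continuity is used: $\bm v$ is continuous on $\overline\Omega$ and coincides with the continuous function $\bm p_\tau$ on $\tau$. Hence it coincides with $\bm p_\tau$ on $\overline\tau$, and so on $\widetilde\tau$. This is where the argument differs from the discontinuous case, but it is a short limiting argument rather than a real obstacle.

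The width count repeats the earlier proofs. Every $\chi_i^\tau$ uses only $\varrho_0(\pm\lambda_i^\tau)$, and the ReLU term uses $\varrho_1(\lambda_i^\tau)$. Each $\pm\lambda_i^\tau$ is a positive multiple of one of the two orientations of a facet-supporting hyperplane. After absorbing the positive scalings into the second-layer weights and merging duplicates, there are at most $2H_\mathcal T$ first-layer preactivations. The second layer has one neuron per pair $(\tau,\bm\beta)$, giving width $MN_\mathcal T$; note $\binom{d+k}{k}=\binom{d+k}{d}$. The hidden parameters depend only on the mesh and on the sets $\mathcal I_\tau^\pm$. The output matrix $\bm W_\III$ collects the columns $\bm u_{\bm\beta}^\tau$ and is the only part that depends on $\bm v$, and all components share the hidden features as in Corollary~\ref{cor: dg multi}.

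I expect no step to be hard. The only point needing care is the boundary agreement argument above, together with checking that no other element's localizer fires on a shared face. That second check is exactly what the disjointness in \eqref{eq:half open decomposition} guarantees.
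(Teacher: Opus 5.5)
Your proposal is correct and follows essentially the same route as the paper's proof. Both expand each element polynomial in the affine-power basis and use continuity to pass from agreement on $\tau$ to agreement on $\overline\tau$. Both then assemble with the half-open localizers of Lemma~\ref{lem:assigned element localizer}, get exactness from the disjointness in \eqref{eq:half open decomposition}, and count widths $2H_\mathcal T$ and $MN_\mathcal T$ in the same way.
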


The statement includes $k=0$: then $M=1$, and each assigned region requires one second-layer $\varrho_0$ neuron.

\begin{proof}
	{Let $\bm v\in\mathcal V_k(\mathcal T;\mathbb E)$.} Applying Lemma~\ref{lem:k-linear decompose of Pk} componentwise, for each $\tau\in\mathcal T$ write
	\begin{align}
		\bm v|_\tau
		=
		\bm p_\tau
		=
		\sum_{\bm\beta\in\mathcal A_k^d}
		\bm u_{\bm\beta}^\tau
		(L_{\bm\beta}^\tau)^k,
	\end{align}
	where $\bm p_\tau\in\P_k(\R^d;\R^m)$ and $\bm u_{\bm\beta}^\tau\in\R^m$. The functions $\bm v|_{\overline\tau}$ and $\bm p_\tau|_{\overline\tau}$ are continuous and agree on the dense subset $\tau$ of $\overline\tau$. Hence $\bm p_\tau=\bm v$ on the entire closure $\overline\tau$. Define
	\begin{align}\label{eq:continuous assigned element representation}
		\bm{\mathcal N}(\bm x)
		=
		\sum_{\tau\in\mathcal T}
		\sum_{\bm\beta\in\mathcal A_k^d}
		\bm u_{\bm\beta}^\tau
		\varrho_k\!\left(
		f_{\bm\beta}^{\tau}(\bm x)
		\right).
	\end{align}
	For every $\bm x\in\overline\Omega$, the disjoint partition \eqref{eq:half open decomposition} gives a unique element $\tau_{\bm x}$ such that $\bm x\in\widetilde{\tau_{\bm x}}$. Lemma~\ref{lem:assigned element localizer} then shows that all contributions in \eqref{eq:continuous assigned element representation} except those associated with $\tau_{\bm x}$ vanish. Since $\bm x\in\overline{\tau_{\bm x}}$, the remaining terms give $\bm{\mathcal N}(\bm x)=\bm p_{\tau_{\bm x}}(\bm x)=\bm v(\bm x)$.
	Thus the representation is pointwise exact on $\overline\Omega$, including the exterior boundary and the interior mesh skeleton.

	The first hidden layer contains both orientations of every mesh hyperplane and therefore has at most $2H_\mathcal T$ affine preactivations. For every pair $(\tau,\bm\beta)$, one second-layer neuron computes $\varrho_k(f_{\bm\beta}^{\tau})$. There are $M$ such neurons per element and hence $MN_\mathcal T$ in total. The coefficient vectors $\bm u_{\bm\beta}^\tau$ form the columns of $\bm W_\III\in\R^{m\times MN_\mathcal T}$. Thus the hidden layers are shared by all components, and their widths do not depend on $m$. The construction therefore has exactly the two hidden layers specified in Definition~\ref{def:FNN}.
\end{proof}

To apply the preceding theorem, it remains to ensure the existence of a half-open decomposition. A sufficient condition is the following star condition: there exists a point $\bm z\in\Omega$, lying on no mesh hyperplane, such that
\begin{align}\label{eq:star shaped assumption}
    (1-t)\bm x+t\bm z\in\Omega
    \qquad\text{for every }\bm x\in\overline\Omega\text{ and }0<t\leqslant1.
\end{align}
This condition holds on every bounded convex polyhedral domain. The following lemma adapts the classical half-open decomposition of \cite[Corollary~5.3.5]{beck2018combinatorial} to domains satisfying \eqref{eq:star shaped assumption}.

\begin{lemma}[Half-open decomposition under the star condition]\label{lem:subsimplex element assignment}
Assume that $\bm z\in\Omega$ lies on no mesh hyperplane and satisfies \eqref{eq:star shaped assumption}. For each $\tau\in\mathcal T$, set
\begin{align}
    \mathcal I_{\tau,\bm z}^+=\{i:\lambda_i^\tau(\bm z)>0\},
    \qquad
    \mathcal I_{\tau,\bm z}^-=\{i:\lambda_i^\tau(\bm z)<0\},
\end{align}
and let $\widetilde\tau^{\bm z}$ be the set defined by \eqref{eq:assigned element characterization} with these index sets. Then $\{\widetilde\tau^{\bm z}:\tau\in\mathcal T\}$ is a half-open decomposition of $\mathcal T$.

More precisely, for every $\sigma\in\mathcal S_\mathcal T$ and any $\bm x_\sigma\in\sigma$, the point $(1-\varepsilon)\bm x_\sigma+\varepsilon\bm z$ lies in a fixed open element $a_{\bm z}(\sigma)$ for all sufficiently small $\varepsilon>0$. This element is independent of $\bm x_\sigma$, satisfies $\sigma\subset\overline{a_{\bm z}(\sigma)}$, and equals $\sigma$ when $\sigma\in\mathcal T$. Moreover,
\begin{align}\label{eq:assigned element region}
    \widetilde\tau^{\bm z}
    =
    \bigsqcup_{\substack{\sigma\in\mathcal S_\mathcal T\\a_{\bm z}(\sigma)=\tau}}\sigma.
\end{align}
\end{lemma}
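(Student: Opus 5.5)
The plan is to prove the "more precisely" part first, namely define $a_{\bm z}(\sigma)$ and show it is well defined, and then deduce both \eqref{eq:assigned element region} and the half-open decomposition \eqref{eq:half open decomposition}.

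\emph{Step 1: the element $a_{\bm z}(\sigma)$.} Fix $\sigma\in\mathcal S_\mathcal T$ and $\bm x_\sigma\in\sigma$, and set $\bm y_\varepsilon=(1-\varepsilon)\bm x_\sigma+\varepsilon\bm z$.
\begin{itemize}
\item By \eqref{eq:star shaped assumption}, $\bm y_\varepsilon\in\Omega$ for all $\varepsilon\in(0,1]$.
\item The segment $(\bm x_\sigma,\bm z]$ meets each mesh hyperplane in at most one point, unless it lies in that hyperplane. It cannot lie in one, because $\bm z$ lies on no mesh hyperplane.
\item Hence for small $\varepsilon$ the point $\bm y_\varepsilon$ lies on no mesh hyperplane. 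So $\bm y_\varepsilon$ is in no lower-dimensional subsimplex and therefore lies in some open element.
\item By connectedness of the short segment minus finitely many hyperplanes, this element is constant for small $\varepsilon$.
\end{itemize}
Call this element $\tau$. Then $\bm x_\sigma\in\overline\tau$, and since $\overline\tau$ is a union of subsimplices of $\overline\tau$ and the subsimplices partition $\overline\Omega$, we get $\sigma\subset\overline\tau$.

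To identify $\tau$ combinatorially, note that for $\tau'\supset\sigma$ (closure-wise) we have
\[
\lambda_i^{\tau'}(\bm y_\varepsilon)=(1-\varepsilon)\lambda_i^{\tau'}(\bm x_\sigma)+\varepsilon\lambda_i^{\tau'}(\bm z).
\]
For $i\in\mathcal I_{\tau',\sigma}$ this is positive for small $\varepsilon$. For $i\notin\mathcal I_{\tau',\sigma}$ it equals $\varepsilon\lambda_i^{\tau'}(\bm z)$. Therefore $\bm y_\varepsilon\in\tau'$ for small $\varepsilon$ if and only if $\lambda_i^{\tau'}(\bm z)>0$ for all $i\notin\mathcal I_{\tau',\sigma}$.

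This criterion involves only $\sigma$ and $\bm z$, not $\bm x_\sigma$, which gives independence of the base point. If $\sigma\in\mathcal T$, then $\bm y_\varepsilon\in\sigma$ for small $\varepsilon$ because $\sigma$ is open, so $a_{\bm z}(\sigma)=\sigma$.

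\emph{Step 2: the region identity \eqref{eq:assigned element region}.} I will show that for $\sigma\subset\overline\tau$, we have $\sigma\subset\widetilde\tau^{\bm z}$ if and only if $\lambda_i^\tau(\bm z)>0$ for all $i\notin\mathcal I_{\tau,\sigma}$, i.e.\ iff $a_{\bm z}(\sigma)=\tau$ by Step 1.
\begin{itemize}
\item On $\sigma$, the coordinates with $i\in\mathcal I_{\tau,\sigma}$ are positive, so they satisfy both kinds of constraints in \eqref{eq:assigned element characterization}.
\item The coordinates with $i\notin\mathcal I_{\tau,\sigma}$ vanish on $\sigma$. The constraint $\lambda_i^\tau\geqslant0$ holds iff $i\in\mathcal I_{\tau,\bm z}^+$, while $\lambda_i^\tau>0$ fails.
\item Since $\bm z$ is off all mesh hyperplanes, every index lies in exactly one of $\mathcal I_{\tau,\bm z}^\pm$.
\end{itemize}
This gives the equivalence. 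Moreover, $\widetilde\tau^{\bm z}\subset\overline\tau$ is a union of subsimplices of $\overline\tau$, since its defining conditions are constant on each such subsimplex. Hence $\widetilde\tau^{\bm z}=\bigsqcup_{a_{\bm z}(\sigma)=\tau}\sigma$.

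\emph{Step 3: the decomposition.} The subsimplices partition $\overline\Omega$, and each $\sigma$ has exactly one assigned element $a_{\bm z}(\sigma)$. Combined with Step 2, the sets $\widetilde\tau^{\bm z}$ are disjoint and their union is $\overline\Omega$, which is \eqref{eq:half open decomposition}.

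The main obstacle is Step 1, specifically for boundary points $\bm x_\sigma\in\partial\Omega$. There the star condition with $t>0$ is exactly what guarantees $\bm y_\varepsilon\in\Omega$, so that it lands in some element rather than outside the domain. One must also make sure, using conformity, that the element found this way contains $\sigma$ in its closure.
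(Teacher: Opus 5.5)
Your proposal is correct and follows essentially the same route as the paper. You perturb toward $\bm z$, use the star condition and the genericity of $\bm z$ to land in a fixed open element, and then read off base-point independence, the region identity, and the decomposition from the linear formula $\lambda_i^{\tau}((1-\varepsilon)\bm x+\varepsilon\bm z)=(1-\varepsilon)\lambda_i^{\tau}(\bm x)+\varepsilon\lambda_i^{\tau}(\bm z)$; along the way you make explicit several points the paper leaves implicit (the connectedness argument, the complementarity of $\mathcal I_{\tau,\bm z}^{\pm}$, and that $\widetilde\tau^{\bm z}$ is a union of subsimplices of $\overline\tau$).
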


\begin{proof}
For $\bm x\in\overline\Omega$, the star condition and the genericity of $\bm z$ ensure that $(1-\varepsilon)\bm x+\varepsilon\bm z$ remains in $\Omega$ and avoids every mesh hyperplane for all sufficiently small $\varepsilon>0$. It therefore lies in a fixed open element $\tau$, with $\bm x\in\overline\tau$. For $\bm x\in\overline\tau$, the identity
\begin{align}
    \lambda_i^\tau((1-\varepsilon)\bm x+\varepsilon\bm z)
    =(1-\varepsilon)\lambda_i^\tau(\bm x)
    +\varepsilon\lambda_i^\tau(\bm z)
\end{align}
shows that the perturbed point lies in $\tau$ for all sufficiently small $\varepsilon>0$ if and only if $\lambda_i^\tau(\bm z)>0$ whenever $\lambda_i^\tau(\bm x)=0$. By conformity, the relatively open subsimplex $\sigma$ containing $\bm x$ is contained in $\overline\tau$, and each $\lambda_i^\tau$ is either positive throughout $\sigma$ or identically zero there. Thus the criterion is independent of the representative in $\sigma$, proving the asserted properties of $a_{\bm z}$. The same criterion is precisely \eqref{eq:assigned element characterization} with $\mathcal I_\tau^\pm=\mathcal I_{\tau,\bm z}^\pm$, which gives \eqref{eq:assigned element region}. Since every point is assigned to exactly one element, these regions satisfy \eqref{eq:half open decomposition}.
\end{proof}

Applying Theorem~\ref{thm:continuous FE representation} to the half-open
decomposition supplied by Lemma~\ref{lem:subsimplex element assignment}
yields the following corollary.

\begin{corollary}[Continuous representation under the star condition]\label{cor:continuous star representation}
Let $M=\binom{d+k}{d}$. If $\Omega$ satisfies \eqref{eq:star shaped assumption} for a point $\bm z$ lying on no mesh hyperplane, then $\fnn_d^m(k;2H_\mathcal T,MN_\mathcal T)$ realizes $\mathcal V_k(\mathcal T;\mathbb E)$ pointwise on $\overline\Omega$. The hidden parameters depend only on the mesh, the degree, and the choice of $\bm z$.
\end{corollary}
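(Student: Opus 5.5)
The corollary follows by composing Lemma~\ref{lem:subsimplex element assignment} with Theorem~\ref{thm:continuous FE representation}, with almost no extra work. First, I check that the hypotheses of the lemma hold. The star condition \eqref{eq:star shaped assumption} is assumed for some $\bm z\in\Omega$ lying on no mesh hyperplane. This genericity guarantees that $\lambda_i^\tau(\bm z)\neq0$ for every element $\tau$ and every index $i$, since the zero set of each $\lambda_i^\tau$ is a mesh hyperplane. The index sets $\mathcal I_{\tau,\bm z}^+$ and $\mathcal I_{\tau,\bm z}^-$ are therefore complementary in $\{0,\dots,d\}$, which is exactly what Definition~\ref{def:half open decomposition} requires. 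The lemma then states that $\{\widetilde\tau^{\bm z}\}$ satisfies \eqref{eq:half open decomposition}, so $\mathcal T$ admits a half-open decomposition.

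Second, I invoke Theorem~\ref{thm:continuous FE representation} with this decomposition. For every $\bm v\in\mathcal V_k(\mathcal T;\mathbb E)$ it produces a network in $\fnn_d^m(k;2H_\mathcal T,MN_\mathcal T)$ that agrees with $\bm v$ at every point of $\overline\Omega$. The theorem writes $M=\binom{d+k}{k}$ while the corollary writes $M=\binom{d+k}{d}$; these coincide by the symmetry of binomial coefficients.

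Third, I address the dependence claim. The theorem asserts that the hidden parameters depend only on the mesh and the chosen half-open decomposition (with $d$ and $k$ fixed). Here the decomposition is determined by the signs of $\lambda_i^\tau(\bm z)$, hence by the mesh and $\bm z$. The second-layer weights in \eqref{eq:assigned element localizer} involve $k$ explicitly through the factors $k+1$ and through $\bm\beta\in\mathcal A_k^d$. So the hidden parameters depend only on the mesh, the degree, and $\bm z$, as stated.

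There is no substantial obstacle; the work was already done in the lemma, which handles the geometric content of the star condition. The only points needing care are the genericity remark, which ensures $\mathcal I^\pm$ partition the index set, and reconciling the two binomial-coefficient notations.
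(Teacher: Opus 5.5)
Your proposal is correct and matches the paper's argument: the corollary is obtained by applying Theorem~\ref{thm:continuous FE representation} to the half-open decomposition supplied by Lemma~\ref{lem:subsimplex element assignment}. Your extra checks are exactly the small details the paper leaves implicit: genericity of $\bm z$ makes $\mathcal I_{\tau,\bm z}^\pm$ complementary, and $\binom{d+k}{k}=\binom{d+k}{d}$.
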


\begin{figure}[htbp]
    \centering
    \includegraphics[width=.35\textwidth]{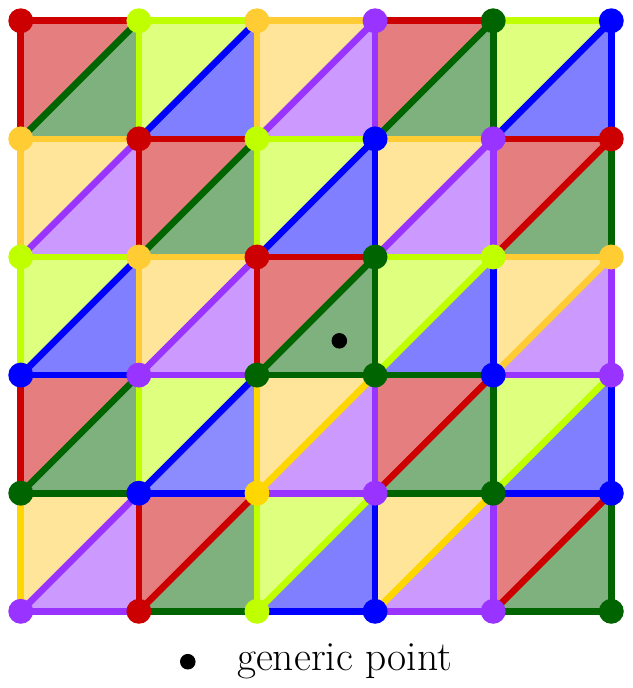}
    \caption{A half-open decomposition on a Kuhn triangulation.}
    \label{fig:disjoint partition}
\end{figure}

Figure~\ref{fig:disjoint partition} illustrates the assignment toward the generic point $\bm z$, marked by the black dot. The bold subsimplices are assigned to the open triangles indicated by the corresponding shading. Each $\widetilde\tau^{\bm z}$ thus consists of $\tau$ together with its assigned boundary subsimplices, as in \eqref{eq:assigned element region}.

\begin{remark}[Necessity of additional assumptions]\label{rem:half open obstruction}
A half-open decomposition need not exist on an arbitrary mesh. To see this in dimension two, let $n_j=n_j(\widetilde\tau)$ denote the number of relatively open $j$-dimensional faces contained in $\widetilde\tau$. Removing $r=\abs{\mathcal I_\tau^-}$ closed edges from $\overline\tau$ gives
\begin{align}
    \begin{array}{c|cccc}
        r & 0 & 1 & 2 & 3 \\
        \hline
        (n_0,n_1,n_2) & (3,3,1) & (1,2,1) & (0,1,1) & (0,0,1) \\
        n_0-n_1+n_2 & 1 & 0 & 0 & 1
    \end{array}
\end{align}
In a half-open decomposition, each relatively open mesh face belongs to exactly one $\widetilde\tau$. Hence, writing $V$, $E$, and $F$ for the total numbers of mesh vertices, edges, and triangles, respectively, we obtain
\begin{align}
    \chi(\overline\Omega)
    &=V-E+F
    =\sum_{\tau\in\mathcal T}
    \bigl(n_0(\widetilde\tau)-n_1(\widetilde\tau)+n_2(\widetilde\tau)\bigr)\notag\\
    &=\#\bigl\{\tau\in\mathcal T:\abs{\mathcal I_\tau^-}\in\{0,3\}\bigr\}
    \geqslant0.
\end{align}
However, a connected planar polygonal domain with two holes has $\chi(\overline\Omega)=1-2=-1$, a contradiction. Thus no triangulation of such a domain admits a half-open decomposition of the stated form, and additional assumptions are necessary to guarantee the decomposition used for the width reduction.
\end{remark}

\begin{remark}[Beyond the star condition]
The star condition is sufficient but not necessary. A simplicial isomorphism preserves half-open decompositions: its affine restriction to each simplex preserves barycentric coordinates. Thus the decomposition depends on the mesh's combinatorial structure rather than on a particular geometric realization. In fact, in dimension two, every conforming triangulation of a polygonal domain whose closure is homeomorphic to a closed disk or a closed annulus admits a half-open decomposition of the form in Definition~\ref{def:half open decomposition}; this follows from the partitionability results in \cite[Section~4.1 and Proposition~4.4(1)]{santamaria2022partitioning}. In particular, this covers domains bounded by a simple closed polygonal curve, as well as domains between two disjoint nested simple closed polygonal curves. The domain may be nonconvex and need not satisfy the star condition. Thus Theorem~\ref{thm:continuous FE representation} gives pointwise representation on $\overline\Omega$ with second-layer width $MN_\mathcal T$, using the original mesh.
\end{remark}

\begin{remark}[Interior representation]\label{rem:continuous interior representation}
If values on $\partial\Omega$ are not prescribed, any conforming simplicial mesh admits a pointwise representation of $\mathcal V_k(\mathcal T;\mathbb E)$ on $\Omega$ by networks in
\begin{align}
    \fnn_d^m(k;2H_\mathcal T,MN_\mathcal T),
    \qquad M=\binom{d+k}{d}.
\end{align}
This includes all interior skeleton values and requires no star condition on $\Omega$.

To see this, enclose $\overline\Omega$ in a bounded convex polyhedral domain $D$ and extend $\mathcal T$ to a conforming simplicial mesh $\mathcal T_D$ of $D$, keeping the original elements unchanged and allowing additional vertices outside $\overline\Omega$. Choose $\bm z\in D$ on no hyperplane of $\mathcal T_D$. Lemma~\ref{lem:subsimplex element assignment} gives a half-open decomposition of $\overline D$. For every $\bm x\in\Omega$, the point $(1-\varepsilon)\bm x+\varepsilon\bm z$ remains in $\Omega$ for all sufficiently small $\varepsilon>0$, so its assigned element belongs to $\mathcal T$. Consequently,
\begin{align}
    \Omega=\bigsqcup_{\tau\in\mathcal T}
    (\widetilde\tau^{\bm z}\cap\Omega).
\end{align}
The localization and assembly in the proof of Theorem~\ref{thm:continuous FE representation}, restricted to the original elements, therefore reproduce the function throughout $\Omega$. Only the original mesh hyperplanes and one $M$-term expansion per original element are needed, giving the stated descriptor. Boundary points may instead be assigned to added elements, so no assertion is made about the values on $\partial\Omega$.
\end{remark}

\subsection{Width and sparse storage}\label{sec:storage}
Both continuous constructions use at most $2H_\mathcal T$ shared affine preactivations, or $4H_\mathcal T$ scalar neurons in the first hidden layer. The second-layer width is $MN_\mathcal S$ on a general mesh and $MN_\mathcal T$ under the star condition. These widths are independent of the output dimension. The parameter conversion in \eqref{eq:MLP parameter conversion} preserves these scalar-neuron widths and the linear sparse-storage scaling. In the rest of this subsection, we restrict attention to the scalar case $\mathbb E=\R$ and $m=1$, and count the stored entries in the general construction.

Fix $d,k\geqslant1$. For a conforming simplicial mesh $\mathcal T$, we count storage using the prescribed sparsity pattern of the implementation in Theorem~\ref{thm:subsimplex representation} with $m=1$. For a matrix or vector $A$, let $\nslot(A)$ denote the number of retained positions, including those whose stored values happen to be zero. Let $P_{\mathcal T}$ denote the total number of retained positions in all weights and biases.

\begin{corollary}[Sparse storage and finite element dimension]
\label{cor:dof optimality}
Under the preceding storage convention,
\begin{equation}\label{eq:element sparse scaling}
    P_{\mathcal T}\asymp_{d,k}N_{\mathcal T}.
\end{equation}
If $\mathcal T$ is shape-regular with all element solid angles at vertices bounded below
by a mesh-independent constant $\theta_0>0$, then additionally
\begin{equation}\label{eq:dof optimal sparse scaling}
    P_{\mathcal T}
    \asymp_{d,k,\theta_0}
    \dim\mathcal V_k(\mathcal T).
\end{equation}
\end{corollary}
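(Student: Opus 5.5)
We count the retained positions layer by layer in the construction of Theorem~\ref{thm:subsimplex representation} with $m=1$, and show that each count lies between two constant multiples of $N_\mathcal T$.

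\textbf{First layer.} $\bm W_\I$ and $\bm b_\I$ have at most $2H_\mathcal T(d+1)$ positions. Every element has $d+1$ facets and every facet lies in some element, so
\begin{align}
    H_\mathcal T\leqslant(d+1)N_\mathcal T .
\end{align}

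\textbf{Second layer.} Each second-layer neuron $f_{\bm\beta}^{\tau_\sigma,\sigma}$ reads only the $\varrho_0$ and $\varrho_1$ channels of the at most $2(d+1)$ preactivations $\pm\bm\lambda^{\tau_\sigma}$, and carries one bias. So each of the $MN_\mathcal S$ rows of $\bm W_{\II,0}$, $\bm W_{\II,1}$ and $\bm b_\II$ has $O_{d}(1)$ retained positions.

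\textbf{Output layer.} $\bm W_\III$ has $MN_\mathcal S$ positions.

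\textbf{Element-count scaling.} Adding up and using $N_\mathcal S\leqslant(2^{d+1}-1)N_\mathcal T$ gives $P_\mathcal T\lesssim_{d,k}N_\mathcal T$. For the lower bound, $N_\mathcal S\geqslant N_\mathcal T$ because the open elements are themselves subsimplices, and $\bm W_\III$ alone already has $MN_\mathcal S\geqslant N_\mathcal T$ positions. This proves \eqref{eq:element sparse scaling}.

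\textbf{Comparison with $\dim\mathcal V_k(\mathcal T)$.} It then suffices to show $\dim\mathcal V_k(\mathcal T)\asymp N_\mathcal T$ under the solid-angle assumption. For $k\geqslant1$ the standard Lagrange degrees of freedom give
\begin{align}
    \dim\mathcal V_k(\mathcal T)=\sum_{j=0}^{d}\binom{k-1}{j}N_j ,
\end{align}
where $N_j$ is the number of $j$-dimensional subsimplices. The $j=d$ term contributes $N_d=N_\mathcal T$ with coefficient $\binom{k-1}{d}$, which vanishes when $k\leqslant d$. So the lower bound cannot come from counting elements directly; we use the vertices, whose coefficient $\binom{k-1}{0}=1$ is always present:
\begin{align}
    \dim\mathcal V_k(\mathcal T)\geqslant N_0 .
\end{align}

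The remaining task is $N_\mathcal T\lesssim_{d,\theta_0}N_0$, and this is where the solid-angle bound is used.
\begin{itemize}
    \item Every element has $d+1$ vertices, so $(d+1)N_\mathcal T$ counts the (vertex, incident element) pairs.
    \item At an interior vertex the incident solid angles sum to the full sphere measure $\abs{S^{d-1}}$. At a boundary vertex they sum to at most $\abs{S^{d-1}}$, since the incident elements are disjoint open cones near the vertex.
    \item Each incident element contributes at least $\theta_0$, so each vertex lies in at most $\abs{S^{d-1}}/\theta_0$ elements.
\end{itemize}
Hence
\begin{align}
    (d+1)N_\mathcal T\leqslant\frac{\abs{S^{d-1}}}{\theta_0}\,N_0 .
\end{align}
The same local bound, together with $N_j\leqslant\binom{d+1}{j+1}N_\mathcal T$, also gives the upper bound $\dim\mathcal V_k(\mathcal T)\lesssim_{d,k}N_\mathcal T$. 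Combining the two directions with \eqref{eq:element sparse scaling} yields \eqref{eq:dof optimal sparse scaling}.

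The main obstacle is this lower bound $\dim\mathcal V_k\gtrsim N_\mathcal T$ for small $k$. It needs the vertex-valence bound, and that bound must be stated carefully at boundary vertices, including on nonconvex domains. Everything else is routine bookkeeping of the sparsity pattern.
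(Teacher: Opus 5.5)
Your proposal is correct and follows the paper's proof essentially step by step. It uses the same ingredients: a layer-by-layer slot count with $H_\mathcal T\leqslant(d+1)N_\mathcal T$ and $N_\mathcal T\leqslant N_\mathcal S\leqslant(2^{d+1}-1)N_\mathcal T$, the lower bound $P_\mathcal T\geqslant\nslot(\bm W_\III)$, and vertex--element double counting under a bounded valence to get $\dim\mathcal V_k(\mathcal T)\gtrsim N_\mathcal T$.

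The only differences are cosmetic. You use the explicit Lagrange count $\sum_j\binom{k-1}{j}N_j$, where the paper uses $\mathcal V_1\subset\mathcal V_k$ with $\dim\mathcal V_1=V$ for the lower bound and injectivity of restriction ($\dim\mathcal V_k\leqslant MN_\mathcal T$) for the upper bound. You also spell out the solid-angle justification of the valence bound, which the paper leaves implicit. That upper bound on $\dim\mathcal V_k$ does not actually need the valence bound.
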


\begin{proof}
	Set $M=\binom{d+k}{d}$. The elementary bounds
	\begin{align}
		H_{\mathcal T}\leqslant(d+1)N_{\mathcal T},
		\qquad
		N_{\mathcal T}\leqslant N_{\mathcal S}
		\leqslant(2^{d+1}-1)N_{\mathcal T}
	\end{align}
	and Theorem~\ref{thm:subsimplex representation} give
	\begin{align}
		n_1=2H_{\mathcal T}=\mathcal O_d(N_{\mathcal T}),
		\qquad
		n_2=MN_{\mathcal S}=\mathcal O_{d,k}(N_{\mathcal T}).
	\end{align}
	Each row of $\bm W_{\II,0}$ retains at most $2(d+1)$ storage slots, while each row of $\bm W_{\II,1}$ retains at most $d+1$. Therefore
	\begin{align}
		\nslot(\bm W_\I)+
		\nslot(\bm b_\I)
		&=\mathcal O_d(H_{\mathcal T})
		=\mathcal O_d(N_{\mathcal T}),\\
		\nslot(\bm W_{\II,0})+
		\nslot(\bm W_{\II,1})+
		\nslot(\bm b_\II)+
		\nslot(\bm W_\III)
		&=\mathcal O_{d,k}(MN_{\mathcal S})
		=\mathcal O_{d,k}(N_{\mathcal T}).
	\end{align}
	Thus $P_{\mathcal T}\lesssim_{d,k}N_{\mathcal T}$. Conversely, note
	\begin{align}\label{eq:sparse storage lower bound}
		P_{\mathcal T} \geqslant \nslot(\bm W_\III) = n_2=MN_{\mathcal S}
		\geqslant MN_{\mathcal T}.
	\end{align}
	It follows that $P_{\mathcal T}\asymp_{d,k}N_{\mathcal T}$.

	It remains to compare $N_{\mathcal T}$ with $\dim\mathcal V_k(\mathcal T)$ under the additional shape regularity condition. The restriction map from $\mathcal V_k(\mathcal T)$ into the product of the local polynomial spaces is injective, and therefore
	\begin{align}\label{eq:finite element dimension upper bound}
		\dim\mathcal V_k(\mathcal T)
		\leqslant MN_{\mathcal T}.
	\end{align}
	Let $V$ be the number of mesh vertices, and let $m_{\bm y}$ be the number of elements incident to a vertex $\bm y$. By the shape regularity condition, $m_{\bm y}\leqslant C_{d,\theta_0}$ for every mesh vertex $\bm y$. Double counting the element--vertex incidences yields
	\begin{align}
		(d+1)N_{\mathcal T}
		=
		\sum_{\bm y\,\text{a mesh vertex}}m_{\bm y}
		\leqslant
		C_{d,\theta_0}V.
	\end{align}
	Because $k\geqslant1$, $\mathcal V_1(\mathcal T)\subset\mathcal V_k(\mathcal T)$ and $\dim\mathcal V_1(\mathcal T)=V$. Hence
	\begin{align}\label{eq:finite element dimension lower bound}
		\dim\mathcal V_k(\mathcal T)
		\geqslant V
		\gtrsim_{d,\theta_0}N_{\mathcal T}.
	\end{align}
	Combining \eqref{eq:finite element dimension upper bound} and \eqref{eq:finite element dimension lower bound} gives $\dim\mathcal V_k(\mathcal T)
		\asymp_{d,k,\theta_0}N_{\mathcal T}$. Together with \eqref{eq:sparse storage lower bound} and the preceding storage upper bound, this proves \eqref{eq:dof optimal sparse scaling}.
\end{proof}

\begin{remark}[Sparse storage of the MLP]
The same scaling holds for the particular sparse MLP constructed by \eqref{eq:MLP parameter conversion}. Let $\widetilde P_{\mathcal T}$ count the retained entries of $\widetilde{\bm W}_\I$, $\widetilde{\bm b}_\I$, $\widetilde{\bm W}_\II$, $\widetilde{\bm b}_\II$, and $\widetilde{\bm W}_\III$, storing both copies of the paired first-layer parameters. Each of the two column blocks of $\widetilde{\bm W}_\II$ uses the union of the sparsity patterns of $\bm W_{\II,0}$ and $\bm W_{\II,1}$. Hence
\begin{align}
    \nslot(\widetilde{\bm W}_\I)+\nslot(\widetilde{\bm b}_\I)
    &=2\bigl(\nslot(\bm W_\I)+\nslot(\bm b_\I)\bigr),\\
    \nslot(\widetilde{\bm W}_\II)
    &\leqslant2\bigl(\nslot(\bm W_{\II,0})+\nslot(\bm W_{\II,1})\bigr).
\end{align}
The output matrix is unchanged, so
\begin{align}
    MN_\mathcal S\leqslant\widetilde P_{\mathcal T}
    \leqslant2P_{\mathcal T},
    \qquad
    \widetilde P_{\mathcal T}
    \asymp_{d,k,\theta_0}\dim\mathcal V_k(\mathcal T)
    \asymp_{d,k,\theta_0}N_{\mathcal T}.
\end{align}
In particular, the number of nonzero MLP parameters is $\mathcal O_{d,k}(N_{\mathcal T})$. The two-sided comparison concerns retained storage slots, as in Corollary~\ref{cor:dof optimality}; individual values may vanish for particular target functions.
\end{remark}

\subsection{Approximation on a uniform mesh}\label{sec:continuous approximation}

Corollary~\ref{cor:DG Sobolev approximation} gives estimates in broken Sobolev norms. We now construct continuous network approximants with global $W^{1,p}$ error bounds and, for $p=\infty$, uniform bounds on the closed domain. We use uniform Kuhn meshes to make the resulting network sizes explicit: their numbers of facet-supporting hyperplanes and elements grow as $\mathcal O_d(N)$ and $\mathcal O_d(N^d)$, respectively, leading to the same respective growth rates for the two hidden-layer widths.

Let $\Omega=(0,1)^d$, let $N$ be a positive integer, and set $h=1/N$. Divide the cube into $N^d$ cubes of side length $h$. On each small cube, use the same Kuhn triangulation: in local coordinates $\bm y\in(0,1)^d$, take the $d!$ simplices
\begin{align}
    0<y_{\pi(1)}<\cdots<y_{\pi(d)}<1,
\end{align}
where $\pi$ ranges over all permutations of $\{1,\ldots,d\}$. Denote the resulting conforming mesh by $\mathcal T_N$.

Its facet-supporting hyperplanes are
\begin{align}
    x_i&=j/N,
    &&1\leqslant i\leqslant d,\quad 0\leqslant j\leqslant N,\\
    x_i-x_j&=\ell/N,
    &&1\leqslant i<j\leqslant d,\quad -N+1\leqslant\ell\leqslant N-1.
\end{align}
The first family contains the grid hyperplanes. The second contains the images of the local ordering planes $y_i=y_j$; all the listed offsets occur. Hence
\begin{align}\label{eq:Kuhn counts}
    N_{\mathcal T_N}=d!N^d,\qquad
    H_{\mathcal T_N}
    =d(N+1)+\binom d2(2N-1).
\end{align}

These counts and Theorem~\ref{thm:continuous FE representation},
combined with conforming finite element approximation estimates,
yield continuous network approximants with global Sobolev error bounds.

\begin{corollary}[Continuous finite element approximation]
	\label{cor:continuous approximation}
	Let $k\geqslant1$, $1\leqslant p\leqslant\infty$, and let
	$1\leqslant r\leqslant k+1$ be an integer.
	Set $h=1/N$ on $\Omega=(0,1)^d$.
	For every $\bm v\in W^{r,p}(\Omega;\mathbb E)$,
	there exists a network
	\begin{align}\label{eq:continuous approximation class}
		\bm{\mathcal N}\in\fnn_d^m\left(
		k;\,2d^2N-d^2+3d,\,
		\frac{(d+k)!}{k!}N^d\right)
	\end{align}
	that is continuous on $\overline\Omega$ and satisfies
	\begin{align}\label{eq:continuous network approximation}
		\norm{\bm v-\bm{\mathcal N}}_{W^{s,p}(\Omega;\mathbb E)}
		\leqslant
		C h^{r-s}\abs{\bm v}_{W^{r,p}(\Omega;\mathbb E)},
		\qquad s=0,1.
	\end{align}
	The constant $C$ depends only on $d$, $k$, $r$, $p$, and $m$.
	When $p=\infty$, taking the continuous representative of
	$\bm v$ on $\overline\Omega$, the same network satisfies
	\begin{align}\label{eq:continuous uniform approximation}
		\sup_{\bm x\in\overline\Omega}
		\norm{\bm v(\bm x)-\bm{\mathcal N}(\bm x)}_{\mathbb E}
		\leqslant
		C h^r\abs{\bm v}_{W^{r,\infty}(\Omega;\mathbb E)}.
	\end{align}
\end{corollary}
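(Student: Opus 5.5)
The plan is to combine a standard conforming finite element quasi-interpolation estimate on $\mathcal T_N$ with the exact realization of Corollary~\ref{cor:continuous star representation} on the Kuhn mesh, and then check the width arithmetic.

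\textbf{Step 1 (star condition).} $\Omega=(0,1)^d$ is convex, so \eqref{eq:star shaped assumption} holds for every interior point. Since the mesh hyperplanes form a finite union of measure-zero sets, we can choose $\bm z\in\Omega$ on none of them. Lemma~\ref{lem:subsimplex element assignment} then gives a half-open decomposition. Theorem~\ref{thm:continuous FE representation} therefore realizes every $\bm v_h\in\mathcal V_k(\mathcal T_N;\mathbb E)$ pointwise on $\overline\Omega$ in $\fnn_d^m(k;2H_{\mathcal T_N},MN_{\mathcal T_N})$.

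\textbf{Step 2 (widths).} From \eqref{eq:Kuhn counts},
\[
2H_{\mathcal T_N}=2d(N+1)+d(d-1)(2N-1)=2d^2N-d^2+3d .
\]
Also
\[
MN_{\mathcal T_N}=\binom{d+k}{d}\,d!\,N^d=\frac{(d+k)!}{k!}N^d ,
\]
which matches \eqref{eq:continuous approximation class}.

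\textbf{Step 3 (approximation).} The difficulty is that $\bm v$ need not be continuous when $rp\leqslant d$, so nodal interpolation is unavailable in general. We therefore use the Scott--Zhang quasi-interpolant $\Pi_h$ onto $\mathcal V_k(\mathcal T_N;\mathbb E)$, applied componentwise. The Kuhn mesh family is shape-regular, with constants depending only on $d$.

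$\Pi_h$ is well defined on $W^{1,p}$ for $1\leqslant p\leqslant\infty$, since it uses facet averages and traces exist. It reproduces $\P_k$ locally, so polynomials of degree $r-1\leqslant k$ are preserved. A Bramble--Hilbert argument on element patches then gives
\[
\abs{\bm v-\Pi_h\bm v}_{W^{j,p}(\tau)}\lesssim h^{r-j}\abs{\bm v}_{W^{r,p}(\omega_\tau)},\qquad j=0,1 .
\]
The patches have bounded overlap, so summing, or taking the maximum when $p=\infty$, gives the global estimate for $s=0,1$.

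Because $\Pi_h\bm v$ is continuous and piecewise polynomial, it lies in $W^{1,p}(\Omega)$. Its global norm therefore equals the broken norm, with no jump terms. For $r=1$ with $s=1$ the estimate reduces to $W^{1,p}$-stability, which Scott--Zhang provides. Setting $\bm{\mathcal N}=$ the network from Step 1 realizing $\Pi_h\bm v$, the network is continuous on $\overline\Omega$, since it agrees pointwise there with a continuous function.

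\textbf{Step 4 (uniform bound).} For $p=\infty$, $\bm v\in W^{r,\infty}\subset W^{1,\infty}$ is Lipschitz and has a continuous representative. The $L^\infty$ estimate holds almost everywhere, and $\bm v-\bm{\mathcal N}$ is continuous on $\overline\Omega$, so the essential supremum equals the supremum over $\overline\Omega$. This yields \eqref{eq:continuous uniform approximation}.

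The main obstacle is Step 3, specifically the uniform-in-$p$ quasi-interpolation estimate. This includes the $p=1$ trace-based definition and the dependence of the constants only on $d,k,r,p,m$. I would cite Scott--Zhang (or Ern--Guermond) rather than reprove it, and note that the Kuhn mesh shape constants depend only on $d$.
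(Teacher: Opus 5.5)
Your proposal is correct and follows essentially the same route as the paper: apply a conforming quasi-interpolant componentwise (the paper cites the Ern--Guermond operator, which you list as an alternative to Scott--Zhang), realize it exactly via the star-condition half-open decomposition on the cube, read off the widths from the Kuhn counts, and use continuity of $\bm v-\bm{\mathcal N}$ on $\overline\Omega$ to get the uniform bound. The one step you leave implicit is using $h\leqslant1$ to absorb the $L^p$ term when passing from the seminorm estimates to the full $W^{1,p}$ norm, which the paper states explicitly.
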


\begin{proof}
	Apply the conforming finite element quasi-interpolation operator
	of \cite[Theorem~5.2]{ern2017quasi} to each component of $\bm v$.
	The local estimates, together with the uniformly bounded overlap
	of the element patches on $\mathcal T_N$, give
	$\bm v_N\in\mathcal V_k(\mathcal T_N;\mathbb E)$ satisfying
	\begin{align}
		\norm{\bm v-\bm v_N}_{L^p(\Omega;\mathbb E)}
		+h\abs{\bm v-\bm v_N}_{W^{1,p}(\Omega;\mathbb E)}
		\leqslant
		C h^r\abs{\bm v}_{W^{r,p}(\Omega;\mathbb E)}.
	\end{align}
	For integer $r$, these estimates hold for all
	$1\leqslant p\leqslant\infty$; at $p=\infty$, elementwise
	maxima replace summation.
	Since $h\leqslant1$, they also yield the full
	$W^{s,p}(\Omega;\mathbb E)$ bounds for $s=0,1$.
	
	The cube satisfies the star condition
	\eqref{eq:star shaped assumption}, and an interior point can be
	chosen away from the finitely many mesh hyperplanes.
	Theorem~\ref{thm:continuous FE representation} therefore gives
	a network $\bm{\mathcal N}$ that equals $\bm v_N$ at every point
	of $\overline\Omega$.
	The counts \eqref{eq:Kuhn counts} give the network class
	in \eqref{eq:continuous approximation class}.
	In particular, $\bm{\mathcal N}$ is continuous on
	$\overline\Omega$ and belongs to $W^{1,p}(\Omega;\mathbb E)$,
	so \eqref{eq:continuous network approximation} follows.
	
	For $p=\infty$, the continuous representatives of $\bm v$
	and $\bm v_N$ on $\overline\Omega$ satisfy
	\begin{align*}
		\sup_{\bm x\in\overline\Omega}
		\norm{\bm v(\bm x)-\bm v_N(\bm x)}_{\mathbb E}
		=
		\norm{\bm v-\bm v_N}_{L^\infty(\Omega;\mathbb E)}.
	\end{align*}
	Since $\bm{\mathcal N}=\bm v_N$ on $\overline\Omega$,
	\eqref{eq:continuous uniform approximation} follows.
\end{proof}

\begin{remark}[$L^p$ approximation with continuous activations]
	For $k\geqslant1$ and $1\leqslant p<\infty$, the discontinuous activation $\mathrm{ReLU}^0$ can be avoided if only $L^p$ approximation is required. Indeed, applying the localization construction of Jin~\cite{jin2026twohidden} to the positive affine functions $L_{\bm\beta}^{\tau}$ and using $\mathrm{ReLU}^k$ in the second hidden layer yields networks that agree with a given degree-$k$ finite element function outside arbitrarily thin neighborhoods of the mesh skeleton. For each fixed finite element function, these networks remain uniformly bounded as the neighborhoods shrink and hence converge to it in $L^p$. Their hidden-layer widths are independent of the neighborhood thickness, although their weights may grow. Thus the corresponding $L^p$ approximation rates can also be obtained using ReLU in the first hidden layer and $\mathrm{ReLU}^k$ in the second, with widths determined by this localization construction.
\end{remark}

\section{Conclusion}\label{sec:conclusion}

We have given a constructive correspondence between piecewise polynomial finite element functions and two-hidden-layer MLPs that combines a uniform architecture, linear sparse storage, and explicit parameter computation. The architecture applies in arbitrary dimension and polynomial degree on conforming simplicial meshes, with common hidden features for scalar, vector, and tensor outputs. Localizing positive affine powers gives exact elementwise polynomial reproduction, while the choice of geometric tests determines the values on the mesh skeleton. This yields both almost-everywhere DG representation and pointwise continuous representation on the closed domain.

For fixed dimension, degree, and output dimension, the sparse storage grows linearly with the number of elements. A half-open decomposition further reduces the continuous construction to one expansion per element; the star condition guarantees such a decomposition but is not needed for the general subsimplex construction. Under the shape regularity condition, the retained storage count for scalar continuous functions of degree at least one is comparable to the finite element dimension. All parameters are obtained from geometric formulas and local linear systems, whose reference factorizations can be reused. Exact realization transfers broken $W^{s,p}$ approximation estimates to DG network realizations and global $W^{1,p}$ estimates to continuous network realizations, without additional representation error.

The exact representation construction uses a discontinuous first-layer activation and a degree-dependent second-layer activation. Its explicit coefficient maps may also be ill-conditioned at higher degree. Further work includes numerically stable coefficient computation, quantitative control of weight growth, derivative errors in continuous approximations to the discontinuous gate, and smaller pointwise representations on general meshes. For compatible finite element spaces, it would also be useful to develop network constructions that explicitly encode the discrete differential operators and commuting relations in addition to representing the component functions.

\appendix
\section{Local polynomial coefficients}\label{app:coefficient computation}

This appendix gives the coefficient computation used in Section~\ref{sec:polynomial representation}. We use the same ordering $\iota$ as in the proof of Lemma~\ref{lem:k-linear decompose of Pk}. For $k=0$ and $p\in\P_0(\tau)$, the single coefficient is simply the constant value of $p$. For $k\geqslant1$, set
$\mathcal H_k^{d+1}=\{\widehat{\bm\alpha}\in\Nzero^{d+1}:|\widehat{\bm\alpha}|=k\}$
and define the bijection ${}^\sharp:\mathcal A_k^d\to\mathcal H_k^{d+1}$ by
$\bm\alpha^\sharp=(k-|\bm\alpha|,\alpha_1,\ldots,\alpha_d)$. Define the degree-$k$ Lagrange lattice
\begin{align}
	\mathcal X_k(\tau)
	=
	\left\{
	\bm x_{\widehat{\bm\alpha}}
	=
	\sum_{i=0}^d\frac{\widehat\alpha_i}{k}\bm x_i:
	\widehat{\bm\alpha}\in\mathcal H_k^{d+1}
	\right\}.
\end{align}
The node $\bm x_{\widehat{\bm\alpha}}\in\mathcal X_k(\tau)$ has barycentric coordinates $\widehat{\bm\alpha}/k$. If $p = \sum_{\bm\beta\in\mathcal A_k^d} u_{\bm\beta}(L_{\bm\beta}^\tau)^k$, then evaluation at the lattice node $\bm x_{\bm\alpha^\sharp}$ gives
\begin{align}
	p(\bm x_{\bm\alpha^\sharp})
	=
	\sum_{\bm\beta\in\mathcal A_k^d}
	u_{\bm\beta}
	\left(1+\frac1k\bm\alpha\cdot\bm\beta\right)^k.
\end{align}
Set $p_{\bm\alpha}=p(\bm x_{\bm\alpha^\sharp})$ and $A_{\bm\alpha\bm\beta}=\left(1+\frac1k\bm\alpha\cdot\bm\beta\right)^k$. Then $\bm p=\bm A\bm u$. The matrix $\bm V$ from the proof of Lemma~\ref{lem:k-linear decompose of Pk}, with $V_{\bm\gamma\bm\beta}=\bm\beta^{\bm\gamma}$, gives the factorization
\begin{align}
    \bm A=\bm V^{\mathrm T}\bm D_k\bm V,
    \qquad
    (D_k)_{\bm\gamma\bm\gamma}
    =\frac{k!}{(k-|\bm\gamma|)!\,\bm\gamma!\,k^{|\bm\gamma|}},
    \quad \bm\gamma\in\mathcal A_k^d.
\end{align}
This follows directly from the multinomial expansion of $A_{\bm\alpha\bm\beta}$. Since $\bm V$ is invertible and $\bm D_k$ is positive diagonal, $\bm A$ is symmetric positive definite. 

To obtain explicit triangular factors, define the Newton polynomials
\begin{align}
    \psi_{\bm\gamma}(\bm x)
    =\prod_{i=1}^d\prod_{j=0}^{\gamma_i-1}(x_i-j)
    =\sum_{\bm\alpha\in\mathcal A_k^d}
    L_{\bm\gamma\bm\alpha}\bm x^{\bm\alpha}.
\end{align}
Under the ordering $\iota$, their coefficient matrix $\bm L$ is unit lower triangular. Evaluation at the integer lattice gives
\begin{align}
    \bm U=\bm L\bm V,
    \qquad
    U_{\bm\gamma\bm\beta}
    =\psi_{\bm\gamma}(\bm\beta)
    =\begin{cases}
        \displaystyle\frac{\bm\beta!}{(\bm\beta-\bm\gamma)!},
        &\bm\gamma\leqslant\bm\beta,\\[1mm]
        0,&\text{otherwise},
    \end{cases}
\end{align}
where the inequality is componentwise. Thus $\bm U$ is upper triangular with diagonal entries $\bm\gamma!$, and $\bm V=\bm L^{-1}\bm U$. Substituting this identity into $\bm A=\bm V^{\mathrm T}\bm D_k\bm V$ reduces the computation of $\bm u=\bm A^{-1}\bm p$ to two triangular solves, two matrix-vector products and a diagonal scaling. All these matrices remain independent of the geometry of $\tau$, so they can be prepared once and reused across elements. For vector- and tensor-valued polynomials, the same factorization is applied to each component.

However, $\bm A$ may be ill-conditioned at higher polynomial degrees. For example, when $d=2$ and $k=6$, the dimension of $\bm A$ is $28$ and its Euclidean condition number is approximately $4.68\times10^9$. This may lead to numerical stability issues in the coefficient computation even when an explicit inverse is avoided.

\printbibliography

\end{document}